\documentclass[a4paper,11pt]{article}
\usepackage[top=3cm, bottom=3cm, left=3.5cm, right=3.5cm]{geometry}
\usepackage[T1]{fontenc}
\usepackage{amsmath,amssymb,amsthm}
\usepackage{enumitem}
\usepackage{placeins}
\usepackage{tikz}
\usetikzlibrary{arrows.meta,decorations.pathreplacing,positioning,patterns}

\tikzset{
  vtx/.style={circle,fill=white,draw=black,
              minimum size=4.4pt,inner sep=0pt},
  cell/.style={draw=black,line width=.6pt},
  accent/.style={fill=blue!12},
  forbidden/.style={pattern=north east lines,pattern color=black!45},
  hook/.style={line width=1pt,densely dotted},
}

\newcommand{\hpad}[1]{%
  \raisebox{0pt}[\dimexpr\height+8pt\relax][\dimexpr\depth+4pt\relax]{#1}}

\newcommand{\hassea}[1]{\makebox[14mm]{\hpad{%
  \begin{tikzpicture}[baseline=(current bounding box.south),
      x=3.4mm,y=4.6mm,semithick]
    \foreach \i/\x/\y in {#1} \node[vtx] (n\i) at (\x,\y) {};
  \end{tikzpicture}}}}
\newcommand{\hasse}[2]{\makebox[14mm]{\hpad{%
  \begin{tikzpicture}[baseline=(current bounding box.south),
      x=3.4mm,y=4.6mm,semithick]
    \foreach \i/\x/\y in {#1} \node[vtx] (n\i) at (\x,\y) {};
    \foreach \a/\b in {#2} \draw (n\a)--(n\b);
  \end{tikzpicture}}}}

\newtheorem{theorem}{Theorem}[section]
\newtheorem{lemma}[theorem]{Lemma}
\newtheorem{corollary}[theorem]{Corollary}
\newtheorem{proposition}[theorem]{Proposition}
\theoremstyle{definition}
\newtheorem{definition}[theorem]{Definition}
\newtheorem{example}[theorem]{Example}
\theoremstyle{remark}

\newtheorem*{remark*}{Remark}

\newcommand{\Virt}{\mathrm{Virt}}
\newcommand{\Spe}{\mathrm{Spe}}
\newcommand{\CM}[1]{\mathrm{CM}^{\langle #1\rangle}}
\newcommand{\CMsum}{\mathrm{CM}}
\newcommand{\BM}{\mathrm{BalMat}}
\newcommand{\cB}{\mathcal{B}}
\newcommand{\Int}{\mathrm{Int}}
\newcommand{\Aut}{\operatorname{Aut}}
\newcommand{\fix}{\operatorname{fix}}
\newcommand{\Fix}{\operatorname{Fix}}
\newcommand{\Row}{\operatorname{row}}
\newcommand{\Col}{\operatorname{col}}
\newcommand{\Sym}{\mathcal{S}}
\newcommand{\SymOn}{\operatorname{Sym}}
\newcommand{\I}{\mathcal{I}}
\newcommand{\Cyc}{\mathcal{C}}
\newcommand{\cT}{\mathcal{T}}
\newcommand{\cR}{\mathcal{R}}
\newcommand{\BB}{\mathbb{B}}
\newcommand{\NN}{\mathbb{N}}
\newcommand{\ZZ}{\mathbb{Z}}

\begin{document}

\title{The species of interval orders}

\author{Anders Claesson\\[2pt]
  \small Department of Mathematics, University of Iceland\\
  \small\texttt{akc@hi.is}}
\date{22 August 2026}

\maketitle
\thispagestyle{empty}

\begin{abstract}
  We show that, in the ring of virtual species,
  \[
    \I \;=\; \sum_{m\geq 0}\,(-1)^m
    \prod_{i=1}^{m}\bigl((E^{-1})^i-1\bigr),
  \]
  where $\I$ is the species of interval orders and $E^{-1}$ is the
  multiplicative inverse of the species $E$ of sets. The right-hand side
  is the virtual species of signed ballot matrices introduced by Claesson
  and Hannah. They showed that its signed cardinality counts labeled
  interval orders. We strengthen
  this to a species identity, which we prove twice: first algebraically
  and then bijectively, using a natural sign-reversing involution. The cycle
  index series of $\I$ specializes to the
  generating series for labeled and unlabeled interval orders. We describe
  the automorphism group of an
  interval order as a Young subgroup and prove the identity
  $\I=\cR\circ E_+$, where $\cR$ is the species of rigid interval
  orders. We also show that Glaisher's
  T-number $T_n$ counts the $24$-colored interval orders on $[n]$ in
  which no isolated element has color $24$.
\end{abstract}

{\small
\textit{2020 Mathematics Subject Classification.}
05A15, 05A19, 06A07.

\textit{Keywords.} Interval order, $(2{+}2)$-free poset, combinatorial species,
virtual species, Fishburn number, composition matrix,
ballot matrix, sign-reversing involution, Glaisher's T-numbers.
\par}

\section{Introduction}

An \emph{interval order} is a poset whose elements can be represented by
closed real intervals in such a way that $x<y$ if and only if the
interval of $x$ lies entirely to the left of the interval of $y$. By a
theorem of Fishburn~\cite{Fishburn1970}, the finite interval orders
are exactly the $(2{+}2)$-free posets, those with no induced subposet
isomorphic to the disjoint union of two two-element chains.
Bousquet-M\'elou, Claesson, Dukes and Kitaev~\cite{BCDK2010} proved
that the number of unlabeled interval orders on $n$ points is the
coefficient of $x^n$ in
\begin{equation}\label{eq:ogf}
  \sum_{m\geq 0}\,\prod_{i=1}^{m}\bigl(1-(1-x)^i\bigr)
  \;=\; 1+x+2x^2+5x^3+15x^4+53x^5+\cdots,
\end{equation}
a series first studied by Zagier~\cite{Zagier2001}. The coefficients
are called the \emph{Fishburn numbers}~\cite{CL2011} and form sequence
A022493 in the OEIS~\cite{OEIS}. Claesson, Dukes and
Kubitzke~\cite{CDK2011} derived the generating series for labeled interval
orders,
\begin{equation}\label{eq:egf}
  \sum_{m\geq 0}\,\prod_{i=1}^{m}\bigl(1-e^{-ix}\bigr)
  \;=\; 1+x+3\frac{x^2}{2!}+19\frac{x^3}{3!}+207\frac{x^4}{4!}+\cdots
\end{equation}
The coefficients form sequence A079144 in the OEIS~\cite{OEIS}.
Claesson and Hannah~\cite{CH2014} recast \eqref{eq:egf} in the
language of species. Here $e^{-x}$ is the generating series of the
virtual species $E^{-1}$, the multiplicative inverse of the species
$E$ of sets. Thus, the left-hand side of \eqref{eq:egf} suggests the
virtual species
\begin{equation}\label{eq:balmat}
  \BM
  \;=\; \sum_{m\geq 0}\,\prod_{i=1}^{m}\bigl(1-(E^{-1})^i\bigr)
  \;=\; \sum_{m\geq 0}\,(-1)^m\prod_{i=1}^{m}\bigl((E^{-1})^i-1\bigr).
\end{equation}
They represent $\BM$ by signed ballot matrices, and use a
sign-reversing involution to show that the signed cardinality on
$[n]=\{1,\dots,n\}$ is the number of labeled interval orders on $[n]$.

Let $\I$ denote the species of interval orders. Thus, $\I[U]$ is the set
of $(2{+}2)$-free posets on the finite set $U$, and a bijection
$\sigma\colon U\to V$ transports a poset by relabeling. We prove that
$\BM=\I$ in the ring of virtual species.

\begin{theorem}\label{thm:intro}
  In the ring of virtual species,
  \[
    \I \;=\; \sum_{m\geq 0}\,(-1)^m
    \prod_{i=1}^{m}\bigl((E^{-1})^i-1\bigr).
  \]
\end{theorem}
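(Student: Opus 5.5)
I will prove the identity by a species-level decomposition of interval orders into a sum over $m$ of products of $E$ and $E_+$ factors, and then match the two sides by comparing cycle index series. This reduces the identity to one in $\Lambda$ (or rather in the ring of cycle index series). Since virtual species over $\mathbb{Q}$-coefficients are determined up to equality by their cycle index series only for genuine species, I will instead aim for an identity of (virtual) species built from sums and products of $E$, $E_+$ and $E^{-1}$. Such an identity is then verified by the ring axioms.

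\textbf{Step 1 (combinatorial structure).} Every interval order $P$ on $U$ has a canonical representation by a composition matrix. The $m$ distinct left endpoints and $m$ distinct right endpoints are interleaved canonically; this is the Fishburn/Claesson--Hannah correspondence with upper-triangular matrices of sets. Concretely, $P$ corresponds bijectively and naturally (commuting with relabelings) to an $m\times m$ upper-triangular array $(A_{jk})_{j\le k}$ of disjoint subsets partitioning $U$. The constraint is that every row and every column contains a nonempty cell; each element $x\in A_{jk}$ gets interval $[j,k]$. Naturality holds because the representation depends only on the order, not on labels. Hence
\[
\I=\sum_{m\ge0}\CM{m},
\]
where $\CM{m}$ is the species of such "row- and column-nonempty" upper-triangular matrices of sets. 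I will take this canonical-representation fact as the main combinatorial input. It is the step requiring most care: showing the map is a species isomorphism (equivariance and bijectivity, in particular uniqueness of the minimal endpoint representation).

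\textbf{Step 2 (inclusion--exclusion at species level).} An upper-triangular $m\times m$ matrix of sets with no constraint is $E^{m(m+1)/2}$. Imposing nonemptiness of rows and columns by inclusion--exclusion gives $\CM{m}$ as an alternating sum over sets of forced-empty rows/columns. This is legitimate in virtual species because each term is an honest species of matrices with prescribed empty lines. The cleaner route, which I will follow, is the one suggested by the right-hand side. Peel off the last column: the entries in column $k$ of an $m\times m$ matrix number $k$. Write the condition "column $i$ nonempty" as a factor $E^i-1=E^i\cdot(1-(E^{-1})^i)$. Then organize the sum so that the row conditions telescope against the factors $E^{i}$. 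Equivalently, I will prove the recursion
\[
\sum_m \CM{m}=\sum_m\prod_{i=1}^m\bigl(1-(E^{-1})^i\bigr)
\]
by showing both sides satisfy the same functional equation obtained by removing the first row. This mirrors the proof of \eqref{eq:egf} in \cite{CDK2011}, where all manipulations are ring identities in $E$ and $E^{-1}$. Because every step uses only sums, products and $E\cdot E^{-1}=1$, the generating-function proof lifts verbatim to virtual species.

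\textbf{Main obstacle.} The generating-function proof in \cite{CDK2011} uses $e^{-ix}$ substitutions that must be checked to be products rather than compositions. Only then does it lift to virtual species; a substitution like $x\mapsto ix$ is not a species operation, though $(E^{-1})^i$ is. If that lift fails, I will fall back on a sign-reversing involution on signed ballot matrices representing the right side. It would be the Claesson--Hannah involution, made label-preserving so that it is an isomorphism of species. Its fixed points are the positive-sign matrices with nonempty rows and columns, namely $\CM{m}$. I will need to verify that the involution is natural, i.e.\ it commutes with transport of structure. This requires that its choices depend only on positions in the matrix and not on the labels.
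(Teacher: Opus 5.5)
\textbf{There is a genuine gap in Step~2.} Your Step~1 is exactly the paper's starting point: $\I=\sum_m\CM{m}$ via the natural isomorphism $\Gamma$ of Theorem~\ref{thm:canonical}. But Step~2 carries all the content of the theorem, and you never carry it out; none of the routes you sketch works as stated.

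\emph{The double sieve.} You announce a simultaneous sieve over rows and columns but never evaluate it. Its terms are pure powers $E^{N}$, which do not vanish on the empty set, so they cannot be freely regrouped across different $m$.

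\emph{The functional equation.} The route you say you will follow, a functional equation ``obtained by removing the first row'', is never specified, and there is no evident closed one. Deleting row~$1$ leaves an $(m-1)\times(m-1)$ matrix whose columns may be empty, so you must track which columns are empty. That is a sieve again.

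\emph{Lifting the series proof.} You claim the derivation of \eqref{eq:egf} by Claesson, Dukes and Kubitzke consists of ring identities in $E$ and $E^{-1}$ that ``lift verbatim''. This is unverified, and to my knowledge it is not how that derivation goes. It obtains \eqref{eq:egf} from the unlabeled series \eqref{eq:ogf} through a correspondence with pairs of ascent sequences and permutations. In effect this is the substitution $x\mapsto 1-e^{-x}$ at the level of counts, which has no counterpart in $\Virt$.

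\emph{Your opening remark is also wrong.} Cycle index series do not determine even genuine species; see the paper's example $2XE_2+\Cyc_3$ versus $X^3+2E_3$. So no series-level argument can be promoted to the species identity, and every step must hold in $\Virt$.

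\textbf{The missing idea is to sieve over one family of lines only.} Keep ``every row nonempty'' built into the structures and sieve over the set $K\subseteq[m]$ of allowed columns. Then $W_K$ factors over rows as $\prod_{a=1}^m\bigl(E^{\nu_K(a)}-1\bigr)$, with $\nu_K(a)=\#\{b\in K:b\geq a\}$, because the elements of row $a$ form a nonempty set and each chooses one of $\nu_K(a)$ positions. Only sets $K\ni m$ contribute. These correspond to compositions $(\gamma_1,\dots,\gamma_k)$ of $m$, where $\gamma_i$ is the number of rows with $\nu_K=i$. This gives $\CM{m}=\sum(-1)^{m-k}\prod_i(E^i-1)^{\gamma_i}$. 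Summing over $m$ and resumming each geometric series gives
\[
  \sum_{\gamma\geq 1}(-1)^{\gamma-1}\bigl(E^i-1\bigr)^{\gamma}
  =\bigl(E^i-1\bigr)\bigl(E^i\bigr)^{-1}
  =1-\bigl(E^{-1}\bigr)^i .
\]
This is where your factorization $E^i-1=E^i\bigl(1-(E^{-1})^i\bigr)$ belongs. Every factor vanishes on the empty set, so all sums are summable and may be regrouped. Equivalently, you can impose nonempty columns directly as $\prod_i(E^i-1)$, as you hint, and sieve over rows.

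\textbf{Your fallback has a concrete error.} The fixed points cannot be the composition matrices with one-block entries. Such a matrix has sign $(-1)^{m+k}$, where $k$ is the number of nonempty entries, and this can be negative. For example, $\begin{bmatrix}\{1\}&\{2\}\\&\{3\}\end{bmatrix}$ has $m=2$ and $k=3$. Moreover, the Claesson--Hannah involution splits blocks according to their labels, so it is not natural, and making it natural is the whole difficulty. The paper's involution instead splits off or merges whole blocks while inserting or deleting an empty row and column. Its fixed points have one block per row, so all are positive, and they may have empty columns. They correspond to composition matrices only after the empty columns are collapsed by $\kappa$.
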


Several proofs of \eqref{eq:ogf} use cancellation. Levande
\cite{Levande2013} gave a sign-reversing involution on Fishburn diagrams.
Eriksen and Sj\"ostrand \cite{ES2014} used the inclusion-exclusion principle
to refine the series by statistics on matchings and permutations, and Hannah
\cite{Hannah2015} used it to recover the enumeration of unlabeled interval
orders from factorial posets.

Section~\ref{sec:species} recalls the language of species and virtual
species. We give two proofs of Theorem~\ref{thm:intro} at the level of
species. The algebraic proof
(Sections~\ref{sec:canonical}--\ref{sec:main}) combines a composition matrix
representation of interval orders~\cite{CDK2011,DJK2011} with
inclusion-exclusion over the matrix columns. The bijective proof
(Section~\ref{sec:involution}) constructs a natural sign-reversing involution
on ballot matrices whose fixed points are identified with composition
matrices.

Equations~\eqref{eq:ogf} and~\eqref{eq:egf} are direct consequences of
Theorem~\ref{thm:intro}. An expression for the cycle
index series of $\I$ also follows (Section~\ref{sec:series}):
\[
  Z_{\I}(x_1,x_2,x_3,\dots) \;=\;
  \sum_{m\geq 0}\prod_{i=1}^{m}
  \biggl(1-\exp\Bigl(-i\Bigl(\,x_1+\frac{x_2}{2}+\frac{x_3}{3}
    +\cdots\,\Bigr)\Bigr)\biggr).
\]
In Section~\ref{sec:molecular} we describe the automorphism group of an
interval order and the molecular decomposition of $\I$. In particular,
the number of interval orders on $[n]$ fixed by a permutation with $c$
cycles is the number of labeled interval orders on $[c]$, and so
depends on $c$ alone. In Section~\ref{sec:rigid} we show that the
indistinguishability classes of an interval order form a rigid interval
order. This gives $\I=\cR\circ E_+$, in which $\cR$ is the species of
rigid interval orders.

In Section~\ref{sec:glaisher} we show that Glaisher's
T-numbers count certain colored interval orders.

\section{Virtual species}\label{sec:species}

We follow the notation and terminology of Bergeron,
Labelle and Leroux~\cite{BLL1998}.

\subsection{Species and their series}
A species defines both a class of labeled combinatorial structures and
how those structures are affected by relabeling. More precisely, a
species $F$ associates with each finite set $U$ a finite set $F[U]$,
whose elements are called \emph{$F$-structures} on $U$, and with each
bijection $\sigma\colon U\to V$ a bijection
$F[\sigma]\colon F[U]\to F[V]$, called \emph{transport of structure}. These
data satisfy
\[
  F[\mathrm{id}_U]=\mathrm{id}_{F[U]},
  \qquad
  F[\tau\circ\sigma]=F[\tau]\circ F[\sigma]
\]
for bijections $\sigma\colon U\to V$ and $\tau\colon V\to W$.
Equivalently, a species is a functor from $\BB$, the category of
finite sets and bijections, to the category of finite sets and
functions. We write
$\sigma\cdot s$ for $F[\sigma](s)$. Two structures $s\in F[U]$ and
$t\in F[V]$ are \emph{isomorphic} if $\sigma\cdot s=t$ for some bijection
$\sigma\colon U\to V$. Two species $F$ and $G$ are \emph{combinatorially
equal}, written $F=G$, if there is a natural isomorphism between them;
that is, if there is a family of bijections
$F[U]\to G[U]$ commuting with transport. We use the standard species
$1$ (empty set), $X$
(singletons), $E$ (sets), $E_k$ (sets of cardinality $k$), $E_+$
(nonempty sets), $L$ (linear orders), $\Cyc$ (oriented cycles),
and the sum and product of species. A species $F$ has three associated series.
The \emph{generating series} and the \emph{type generating series} are
\[
  F(x)=\sum_{n\geq 0}\bigl|F[n]\bigr|\,\frac{x^n}{n!}\,,
  \qquad
  \widetilde{F}(x)=\sum_{n\geq 0}\bigl|F[n]/\!\sim\bigr|\;x^n ,
\]
where $\sim$ is isomorphism of structures. The \emph{cycle index series} is
\[
  Z_F(x_1,x_2,x_3,\dots)=\sum_{n\geq 0}\frac{1}{n!}
  \sum_{\sigma\in\Sym_n}
  \fix F[\sigma]\; x_1^{\sigma_1}x_2^{\sigma_2}x_3^{\sigma_3}\cdots,
\]
where $\fix F[\sigma]$ is the number of $F$-structures on $[n]$ fixed
by $\sigma$ and $(\sigma_1,\sigma_2,\dots)$ is the cycle type of
$\sigma$. All three are additive and multiplicative, and
\[
  F(x)=Z_F(x,0,0,\dots),
  \qquad
  \widetilde{F}(x)=Z_F(x,x^2,x^3,\dots).
\]

A nonzero species $M$ is \emph{molecular} if any two
$M$-structures are isomorphic.
Such an $M$ is concentrated on sets of some fixed cardinality $n$,
and transport makes $M[n]$ a transitive $\Sym_n$-set. Choosing
$s\in M[n]$ with stabilizer, or automorphism group,
$H=\{\sigma\in\Sym_n:\sigma\cdot s=s\}$ identifies the $M$-structures
on $[n]$ with the cosets of $H$. In this notation we have
$M= X^n/H$, and
another choice of $s$ replaces $H$ by a conjugate. Familiar
examples are
\[
  X^n=X^n/\{1\},\qquad
  E_n=X^n/\Sym_n,\qquad
  \Cyc_n=X^n/\langle(1\,2\,\cdots\,n)\rangle,
\]
the species of linear orders, of sets, and of oriented cycles on
$n$ elements. On three elements, $XE_2$, a distinguished element
together with an unordered pair, is molecular with $H$ generated
by the transposition exchanging the pair.

Every species is the sum of its molecular subspecies. Indeed, for each
$n$ the orbits of $\Sym_n$ on $F[n]$, one for each isomorphism type of
$F$-structure, are molecular summands. Collecting isomorphic
summands gives the \emph{molecular decomposition}
\[
  F\;=\;\sum_{M}f_M\,M
  \qquad(f_M\in\NN),
\]
the sum running over isomorphism classes of molecular species. For
instance, $E=\sum_n E_n$ and $L=\sum_n X^n$, while the
species of graphs on two-element sets decomposes as $2E_2$, since the
empty graph and the one-edge graph give two summands of the same
molecular type. The decomposition is unique, so $F= G$ if and only if
$f_M=g_M$ for every molecular species $M$.

\subsection{The ring of virtual species}
A \emph{virtual species} is an element of
\[
  \Virt=(\Spe\times\Spe)/\!\sim\,,
  \qquad
  (F,G)\sim(H,K)\iff F+K= G+H,
\]
where $\Spe$ is the semiring of species. The equivalence class of
$(F,G)$ is written $F-G$. With the evident addition and
multiplication, $\Virt$ is a commutative ring, and
$F\mapsto F-0$ embeds $\Spe$ into $\Virt$.
Suppose that $F$ and $G$ have molecular decompositions
\[
  F=\sum_M f_M M,
  \qquad
  G=\sum_M g_M M.
\]
Then their difference has the molecular expansion
\[
  \Phi=F-G=\sum_M \phi_M M,
  \qquad
  \phi_M=f_M-g_M\in\ZZ.
\]
The coefficients $\phi_M$ depend only on $\Phi$. Consequently, every
virtual species has a unique \emph{reduced form}
\[
  \Phi=\Phi^+-\Phi^-,
  \qquad
  \Phi^+=\sum_{\phi_M>0}\phi_M M,
  \qquad
  \Phi^-=\sum_{\phi_M<0}(-\phi_M)M,
\]
in which $\Phi^+$ and $\Phi^-$ have no common molecular summand. We call
$\Phi$ \emph{positive} if $\Phi^-=0$, equivalently, if every $\phi_M$ is
nonnegative. The three associated series extend by differences,
\[
  \Phi(x)=F(x)-G(x),
  \qquad
  \widetilde{\Phi}(x)=\widetilde{F}(x)-\widetilde{G}(x),
  \qquad
  Z_\Phi=Z_F-Z_G,
\]
and remain additive and multiplicative. In particular, if $\Phi$ is
positive then $\Phi=\Phi^+$ is a species, so $\Phi(x)$,
$\widetilde{\Phi}(x)$ and $Z_\Phi$ all have nonnegative coefficients.
These three series do not determine a virtual species, and
nonnegativity of their coefficients does not imply positivity.
Bergeron, Labelle and Leroux give the following
example~\cite[Section~2.6, equations~(30)--(31)]{BLL1998}.
Consider the four molecular species $X^3$, $XE_2$, $\Cyc_3$ and $E_3$.
Their fixed-point counts, one representative per
cycle type, are
\[
  {\renewcommand{\arraystretch}{1.2}
    \begin{array}{c|ccc}
      & \mathrm{id} & (1\,2) & (1\,2\,3)\\ \hline
      X^3  & 6 & 0 & 0\\
      XE_2 & 3 & 1 & 0\\
      \Cyc_3  & 2 & 0 & 2\\
      E_3  & 1 & 1 & 1
    \end{array}
  }
\]
Summing the relevant rows of the table shows that the species
\[
  2\,XE_2+\Cyc_3
  \qquad\text{and}\qquad
  X^3+2\,E_3
\]
have the same number of fixed structures, namely $8$, $2$ and $2$,
for every permutation of $[3]$, and hence equal cycle index series.
Yet their molecular decompositions differ, so they are not combinatorially equal.
Their difference $2\,XE_2+\Cyc_3-X^3-2\,E_3$ is therefore a nonzero
virtual species whose three series all vanish, and it is not positive.
In particular, Theorem~\ref{thm:intro} does not follow from the
preceding counting results~\cite{BCDK2010,CDK2011,CH2014}.

We use infinite sums of (virtual) species in the standard \emph{summable}
sense: on each finite set, only finitely many summands contribute.
For example, the sums in \eqref{eq:balmat} are summable because every
factor $(E^{-1})^i-1$ vanishes on the empty set, so the $m$th product
vanishes on sets of cardinality less than $m$. Such sums may be
regrouped freely. Their generating, type generating, and cycle index
series are obtained by summing the corresponding series of the
summands coefficientwise.

\subsection{Ballot matrices}
The multiplicative inverse
\[
  E^{-1} = (1+E_+)^{-1} = \sum_{k\geq 0} (-1)^kE_+^k
\]
of the set species is a standard example of a virtual species. On a
finite set $U$, an $E^{-1}$-structure is a \emph{ballot}, a sequence of
pairwise disjoint nonempty \emph{blocks} with union $U$. A ballot with
$k$ blocks has sign $(-1)^k$. Thus, the ballots with an even number of
blocks make up $(E^{-1})^{+}$, while those with an odd number make up
$(E^{-1})^{-}$. The three associated series are
\begin{equation}\label{eq:einv-values}
  E^{-1}(x)=e^{-x},
  \qquad
  \widetilde{E^{-1}}(x)=1-x,
  \qquad
  Z_{E^{-1}}=e^{-q},
\end{equation}
where here and throughout
\[
  q \;=\; x_1+\frac{x_2}{2}+\frac{x_3}{3}+\cdots,
  \quad\text{so that}\quad
  Z_E=e^{q}.
\]
Signed ballot matrices model the virtual species $\BM$ of
\eqref{eq:balmat}. We shall use this model in the second proof of
Theorem~\ref{thm:intro}.

\begin{definition}[Claesson and Hannah~\cite{CH2014}]
  A \emph{ballot matrix} on a finite set $U$ is an upper triangular
  $m\times m$ array ($m\geq 0$) of ballots on pairwise disjoint sets
  with union $U$, such that every row contains at least one element
  of $U$. We write $\cB[U]$ for the set of ballot matrices on $U$.
  Transport along $\sigma\colon U\to V$ replaces every block $B$
  by $\sigma(B)$. A ballot matrix of \emph{dimension} $m$ with $k$ blocks
  in total is \emph{positive} or \emph{negative} according as $k+m$ is even
  or odd. We write $\cB^+[U]$ and $\cB^-[U]$ for the corresponding
  subsets of $\cB[U]$.
\end{definition}

Note that $m\leq|U|$, so $\cB[U]$ is finite. Positivity and negativity
are preserved by transport, so $\cB^+$ and $\cB^-$ are subspecies of
$\cB$. Here, positive refers to the sign of a ballot matrix, not to
positivity in $\Virt$, and $\cB^{+}$ and $\cB^{-}$ are not the two
parts of the reduced form of $\BM$. In fact, we shall prove that
$\cB^{+}=\cB^{-}+\I$; see Corollary~\ref{cor:positivity}.

\begin{proposition}[Claesson and Hannah~\cite{CH2014}]\label{prop:ballot}
  The virtual species $\BM$ defined by \eqref{eq:balmat} satisfies
  \[
    \BM=\cB^+-\cB^-.
  \]
\end{proposition}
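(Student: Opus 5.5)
We expand the right-hand side of \eqref{eq:balmat} term by term and read off the combinatorial meaning of each monomial. The work rests on two facts about $E^{-1}$. First, with the reduced form above, $E^{-1}=(E^{-1})^+-(E^{-1})^-$, where ballots with $k$ blocks carry sign $(-1)^k$. Second, the factor $(E^{-1})^i-1$ is represented by sequences of $i$ ballots on disjoint sets that are not all empty. Indeed, $(E^{-1})^i$ is the species of $i$-tuples of ballots on pairwise disjoint sets with union $U$, and the sign of such a tuple is $(-1)^{\text{total number of blocks}}$, since signs multiply under the product of virtual species. Subtracting $1$ removes exactly the single structure on the empty set, namely the tuple of $i$ empty ballots, which has sign $+1$. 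So $(E^{-1})^i-1=P_i-N_i$, where $P_i$ and $N_i$ are the species of such $i$-tuples, not all empty, with an even and an odd total number of blocks respectively.

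Next we take the product over $i=1,\dots,m$. Multiplying differences of species gives
\[
  \prod_{i=1}^m (P_i-N_i)=\sum_{S} (-1)^{|S|}\prod_{i\in S}N_i\prod_{i\notin S}P_i .
\]
Concretely, this is the species of $m$-tuples of rows, where row $i$ is an $i$-tuple of ballots. All ballots sit on pairwise disjoint sets whose union is $U$, and every row contains at least one element. The sign of such a structure is $(-1)^{k}$, with $k$ the total number of blocks. Now place row $i$ of the product as row $m+1-i$ of an upper triangular $m\times m$ array: row $m+1-i$ has $i$ entries, in columns $m+1-i,\dots,m$. This identifies the structures with ballot matrices of dimension $m$, and the identification is natural because transport acts blockwise on both sides. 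The extra factor $(-1)^m$ in \eqref{eq:balmat} then gives total sign $(-1)^{k+m}$, which is exactly the sign convention of the definition. The equality $\prod(1-(E^{-1})^i)=(-1)^m\prod((E^{-1})^i-1)$ is just commutative ring algebra in $\Virt$.

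Summing over $m\ge0$ is legitimate because the sum is summable, as remarked above, since $m\le|U|$. Collecting positive and negative structures then gives $\BM=\cB^+-\cB^-$.

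The main obstacle is rigor in the identification of signed products. One must check that a product of virtual species $(A^+-A^-)(B^+-B^-)$ is represented by pairs with multiplied signs; this is immediate from the ring structure of $\Virt$. One must also check that the sign-splitting of $(E^{-1})^i-1$ really corresponds to the parity of the total block count. Everything else is bookkeeping with natural bijections.
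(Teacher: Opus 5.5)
Your proof is correct and follows the paper's route. You represent $(E^{-1})^i-1$ by the $i$-tuples of ballots that are not all empty, signed by $(-1)^{\text{blocks}}$, match the factor for $i$ with the row of length $i$ in an upper triangular $m\times m$ matrix (explicitly, row $m+1-i$), and absorb $(-1)^m$ to get the sign $(-1)^{k+m}$, which the paper does more tersely.
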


\begin{proof}
  The product $(E^{-1})^i$ is represented by the sequences of $i$
  pairwise disjoint ballots, such a sequence with $k$ blocks in
  total having sign $(-1)^k$.
  Subtracting $1$ removes the sequence in which all $i$ ballots are empty.
  Take the product over $i=1,\dots,m$, matching the factor
  $(E^{-1})^i-1$ with the row of length $i$ in an upper triangular
  $m\times m$ matrix. The $m$th term of \eqref{eq:balmat} then
  consists of the ballot matrices of dimension $m$, each with sign
  $(-1)^{m+k}$. Summing over $m$ concludes the proof.
\end{proof}

\section{Interval orders and composition matrices}\label{sec:canonical}

By a \emph{poset} on a finite set $U$ we mean a strict partial order $<$ on
$U$, that is, an irreflexive transitive relation. For $x\in U$ write
\[
  D(x)=\{y:y<x\}
\]
for the \emph{strict down-set} of $x$. The \emph{strict up-set} of $x$
is $\{y:x<y\}$.
By Fishburn's theorem, the finite
$(2{+}2)$-free posets are exactly the finite interval orders;
Bogart~\cite{Bogart1993} gives a short proof.

\begin{definition}
  The species $\I$ of interval orders assigns to a finite set $U$ the
  set $\I[U]$ of $(2{+}2)$-free posets on $U$, here viewed as sets of ordered
  pairs. A bijection $\sigma\colon U\to V$ transports $P\in\I[U]$ to
  \[
    \sigma\!\cdot\!P=\{(\sigma x,\sigma y):(x,y)\in P\}\in\I[V].
  \]
\end{definition}

Since being $(2{+}2)$-free is invariant under relabeling, $\I$ is a
subspecies of the species of posets.

Claesson, Dukes and Kubitzke~\cite{CDK2011} define a \emph{composition
matrix} on a finite set $U$ to be an upper triangular matrix of subsets
of $U$ whose nonempty entries partition $U$ and in which every row and
every column has a nonempty entry. For instance,
\[
  \begin{bmatrix}
    \{1\} & \emptyset & \{2,5\}\\
          & \{3\}     & \emptyset\\
          &           & \{4\}
  \end{bmatrix}
\]
is a $3\times 3$ composition matrix on $[5]$.

An element of $U$ lies in exactly one entry, so an $m\times m$
composition matrix is the same thing as a map recording, for each
element, the position of the entry that contains it. That is the form
we use.

\begin{definition}\label{def:cm}
  For $m\geq 0$ let
  $\Int_m=\{(a,b)\in[m]\times[m]: a\leq b\}$. We regard $(a,b)$ both as
  the position in row $a$ and column $b$ of an upper triangular matrix
  and as the interval $[a,b]\subseteq[m]$, whence the notation. The
  species $\CM{m}$ of $m\times m$ composition matrices assigns to $U$
  the set of maps
  \[
    C\colon U\to\Int_m
  \]
  such that both coordinate maps are surjective onto $[m]$. That is,
  if we write $C(x)=(a(x),b(x))$, then both $x\mapsto a(x)$ and
  $x\mapsto b(x)$ are surjective onto $[m]$. The transport of $C$
  along a bijection $\sigma\colon U\to V$ is $C\circ\sigma^{-1}$.
\end{definition}

The entry in position $(a,b)\in\Int_m$ is the fiber
$C^{-1}(a,b)$. These fibers are pairwise disjoint and have union $U$.
The two surjectivity conditions say that no row and no column is
empty. So the $\CM{m}$-structures on $U$ are the $m\times m$
composition matrices on $U$. Since $\CM{m}[U]=\emptyset$ when $m>|U|$,
only finitely many summands are nonempty on each finite set $U$, and
\[
  \CMsum=\sum_{m\geq 0}\CM{m}
\]
is a well-defined species.

We may also read a composition matrix directly as a family of intervals.
If $C(x)=(a,b)$, then the interval assigned to $x$ is $[a,b]$. Every value
in $[m]$ occurs as a left endpoint and as a right endpoint. Thus,
composition matrices are the same objects as families of intervals
with these two endpoint conditions.

\begin{lemma}[Fishburn~\cite{Fishburn1970}]\label{lem:chain}
  A finite poset $P$ is $(2{+}2)$-free if and only if its strict
  down-sets $\{D(x):x\in P\}$ form a chain under inclusion.
\end{lemma}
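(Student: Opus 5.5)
We prove the two implications separately, using only the definition of $(2{+}2)$-freeness and transitivity of $<$.

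For the forward direction, assume $P$ is $(2{+}2)$-free and suppose, for contradiction, that two strict down-sets are incomparable. Then there are $x,y\in P$ with $D(x)\not\subseteq D(y)$ and $D(y)\not\subseteq D(x)$. Choose $a\in D(x)\setminus D(y)$ and $b\in D(y)\setminus D(x)$, so that $a<x$, $b<y$, $a\not<y$ and $b\not<x$. We claim that $\{a<x\}$ and $\{b<y\}$ form an induced $2{+}2$, which requires checking that the four elements are distinct and that no further comparabilities hold.
\begin{itemize}
  \item Distinctness. We have $a\neq b$, since $a\in D(x)$ but $b\notin D(x)$. We have $x\neq y$, since otherwise $D(x)=D(y)$. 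Also $a\neq y$: otherwise $a<x$ would give $y<x$, and then $b<y<x$ would put $b\in D(x)$. Symmetrically $b\neq x$.
  \item Incomparabilities. If $x<y$, then $a<x<y$ gives $a\in D(y)$, a contradiction; symmetrically $y\not<x$. If $y<a$, then $b<y<a<x$ gives $b<x$, a contradiction; and $a<y$ is excluded by hypothesis. Symmetrically $b$ and $x$ are incomparable. If $a<b$, then $a<b<y$, a contradiction; if $b<a$, then $b<a<x$, a contradiction. So $a$ and $b$ are incomparable. Finally, $y<a$ and $x<b$ were already excluded above.
\end{itemize}
This is an induced $2{+}2$, contradicting the hypothesis. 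Hence the down-sets form a chain.

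For the converse, assume the down-sets form a chain and suppose $a<x$, $b<y$ is an induced $2{+}2$, so $a\not<y$ and $b\not<x$. Then $a\in D(x)\setminus D(y)$ and $b\in D(y)\setminus D(x)$. Hence neither $D(x)\subseteq D(y)$ nor $D(y)\subseteq D(x)$ holds, a contradiction.

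The only real work is the bookkeeping in the forward direction: showing the four chosen elements are distinct and pairwise incomparable across the two chains, using transitivity and irreflexivity.
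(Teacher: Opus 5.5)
Your proof is correct and is essentially the paper's argument: choose $a\in D(x)\setminus D(y)$ and $b\in D(y)\setminus D(x)$ and show that $a<x$, $b<y$ induce a $2{+}2$, with the converse reading these two witnesses off a given $2{+}2$. The only difference is that you write out the case check that the paper compresses into one remark, namely that any equality or comparison between the two chains would, by transitivity, force $a<y$ or $b<x$.
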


\begin{proof}
  Suppose that $D(x)$ and $D(y)$ are incomparable. Choose
  $a\in D(x)\setminus D(y)$ and $c\in D(y)\setminus D(x)$. Then
  $a<x$ and $c<y$, while $a\not<y$ and $c\not<x$. Any equality or
  comparison between the two chains would, by transitivity, give one of
  these two forbidden relations. Thus, the four elements are distinct
  and induce a $2{+}2$.
  Conversely, if $a<b$ and $c<d$ induce a $2{+}2$, then
  $a\in D(b)\setminus D(d)$ and $c\in D(d)\setminus D(b)$. Thus,
  $D(b)$ and $D(d)$ are incomparable.
\end{proof}

Let $P\in \I[U]$. By Lemma~\ref{lem:chain}, its
distinct strict down-sets can be written
\[
  D_1\subset\cdots\subset D_m.
\]
For $x\in U$, let $\ell(x)$ be its \emph{level}, so that $D_{\ell(x)}$
is its strict down-set, and put
\[
  r(x)=\max\{k\in[m]:x\notin D_k\}.
\]
Finally, define $\Gamma(P):U\to \Int_m$ by $\Gamma(P)(x)=\bigl(\ell(x),r(x)\bigr)$.

\begin{lemma}[Dukes, Jel\'{\i}nek and Kubitzke~\cite{DJK2011}]\label{lem:djk}
  If $P\in\I[U]$ has $m$ distinct strict down-sets, then
  $\Gamma(P)\in\CM{m}[U]$, and $x<y$ in $P$ if and only if
  $r(x)<\ell(y)$. Thus, the intervals $[\ell(x),r(x)]$ represent
  $P$. The map $\Gamma\colon\I[U]\to\CMsum[U]$ is a bijection. If
  $C(x)=(a(x),b(x))$, its inverse is
  \[
    \Gamma^{-1}(C)=\bigl\{(x,y): b(x)<a(y)\bigr\}.
  \]
\end{lemma}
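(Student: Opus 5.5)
We prove the lemma in three steps: $\Gamma(P)$ is a composition matrix, its intervals represent $P$, and the map $C\mapsto\{(x,y):b(x)<a(y)\}$ is a two-sided inverse of $\Gamma$. Throughout, $D_1\subset\cdots\subset D_m$ are the distinct strict down-sets of $P$, as supplied by Lemma~\ref{lem:chain}.

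\textbf{Step 1: $\Gamma(P)\in\CM{m}[U]$.} First we check that $r(x)$ is defined and that $\ell(x)\le r(x)$. By irreflexivity $x\notin D(x)=D_{\ell(x)}$, so $k=\ell(x)$ lies in the set defining $r(x)$; hence $r(x)\ge\ell(x)$ and $\Gamma(P)(x)\in\Int_m$.

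Surjectivity of $\ell$ is immediate, since every $D_k$ is the down-set of some element.

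For surjectivity of $r$, fix $k$. If $k=m$, any maximal element $x$ lies in no down-set, so $r(x)=m$. If $k<m$, choose $x\in D_{k+1}\setminus D_k$. Because the $D_j$ are nested, $x\notin D_j$ for $j\le k$ and $x\in D_j$ for $j\ge k+1$. Hence $r(x)=k$. The same observation shows in general that $x\notin D_k$ if and only if $k\le r(x)$, which is the key monotonicity fact used below.

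\textbf{Step 2: $x<y$ if and only if $r(x)<\ell(y)$.} We have $x<y$ exactly when $x\in D(y)=D_{\ell(y)}$. By the monotonicity fact, this holds exactly when $\ell(y)>r(x)$. It follows that $\Gamma^{-1}(\Gamma(P))=P$ with the proposed inverse formula, so $\Gamma$ is injective.

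\textbf{Step 3: every composition matrix arises.} Let $C=(a,b)$ be a composition matrix, and define $P_C=\{(x,y):b(x)<a(y)\}$. Then $P_C$ is irreflexive because $a\le b$. It is transitive because $b(x)<a(y)\le b(y)<a(z)$. It is $(2{+}2)$-free because it is an interval order, represented by the intervals $[a(x),b(x)]$.

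We must show $\Gamma(P_C)=C$; this is the main point. The strict down-set of $y$ is $\{x:b(x)<a(y)\}$, which depends only on $a(y)$. Set $S_k=\{x:b(x)<k\}$. As $k$ runs over $[m]$, the sets $S_k$ are distinct: surjectivity of $b$ gives an $x$ with $b(x)=k$, and this $x$ lies in $S_{k+1}\setminus S_k$. Every $k$ is attained as $a(y)$ by surjectivity of $a$. So the distinct down-sets are exactly $S_1\subset\cdots\subset S_m$, there are $m$ of them, and $\ell(y)=a(y)$. Finally, $x\notin S_k$ if and only if $k\le b(x)$, so $r(x)=b(x)$.

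Hence $\Gamma$ is bijective with the stated inverse. The only delicate point is this last identification of levels, which uses both surjectivity conditions: surjectivity of $b$ makes the down-sets distinct, and surjectivity of $a$ ensures that all of them occur.
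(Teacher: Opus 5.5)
Your proof is correct and follows the paper's argument essentially step for step. It uses the same criterion $x<y\iff x\in D_{\ell(y)}\iff r(x)<\ell(y)$, the same choice $x\in D_{k+1}\setminus D_k$ to attain $r(x)=k$, and the same chain $\{x:b(x)<k\}$, together with both surjectivity conditions, to get $\Gamma(P_C)=C$. The differences are cosmetic: you obtain nonemptiness and $\ell(x)\le r(x)$ at once by placing $\ell(x)$ in the set defining $r(x)$, where the paper uses $D_1=\emptyset$; and you attain $r(x)=m$ with a maximal element instead of an element of level $m$.
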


\begin{proof}
  The result is clear if $U$ is empty. Suppose that $U$ is nonempty
  and $P\in\I[U]$ has $m$ distinct strict down-sets
  $D_1\subset D_2\subset\cdots\subset D_m$. Here $D_1=\emptyset$, so the
  set defining $r(x)$ is nonempty. Since the sets $D_k$ form a chain,
  \begin{equation}\label{eq:order-criterion}
    x<y
    \;\iff\;x\in D_{\ell(y)}
    \;\iff\;r(x)<\ell(y).
  \end{equation}
  Taking $y=x$ and using irreflexivity gives $\ell(x)\leq r(x)$.

  The map $\ell$ is surjective by construction. If $k<m$, then
  $r(x)=k$ for any $x\in D_{k+1}\setminus D_k$. For $k=m$, choose $x$
  with $\ell(x)=m$; then $m=\ell(x)\leq r(x)\leq m$. Thus, $r$ is also
  surjective, and $\Gamma(P)$ is a composition matrix.

  Conversely, let $C\in\CM{m}[U]$, with $C(x)=(a(x),b(x))$, and define
  $P_C$ by $x<y$ if and only if $b(x)<a(y)$. The intervals
  $[a(x),b(x)]$ represent $P_C$, so it is an interval order. The strict
  down-set of $y$ in $P_C$ is $\{x:b(x)<a(y)\}$.
  Since $b$ is surjective, the sets
  \[
    D'_{\mkern-1mu k}=\{x:b(x)<k\}\qquad(k\in[m])
  \]
  form a chain of $m$ distinct sets. Since $a$ is surjective,
  each $D'_{\mkern-1mu k}$ occurs as a down-set, and the down-set of $y$ is
  $D'_{\!a(y)}$. Hence,
  $\ell(y)=a(y)$. For fixed $x$, the set $D'_{\mkern-1mu k}$ does not contain $x$
  precisely when $k\leq b(x)$. Thus, $r(x)=b(x)$ and
  $\Gamma(P_C)=C$. The criterion~\eqref{eq:order-criterion} also gives
  $P_{\Gamma(P)}=P$, and the two constructions are mutually inverse.
\end{proof}

Equivalently, when the distinct strict up-sets are ordered by reverse
inclusion, $r(x)$ is the level of the strict up-set of $x$.

We call $\Gamma(P)$ the \emph{composition matrix of $P$}.

We read the criterion $r(x)<\ell(y)$ off the matrix as follows: the hook
from $x$ down its column to the row of $y$, and then along that row to
$y$, turns strictly below the diagonal:
\[
\begin{tikzpicture}[x=5.2mm,y=5.0mm,
    baseline=(current bounding box.center),
    every node/.style={font=\small}]
  \fill[black!10] (0,0) -- (6,0) -- (6,-6) -- cycle;
  \draw[line width=.6pt]
    (-.05,.35) -- (-.35,.35) -- (-.35,-6.35) -- (-.05,-6.35);
  \draw[line width=.6pt]
    (6.05,.35) -- (6.35,.35) -- (6.35,-6.35) -- (6.05,-6.35);
  \draw[hook] (2.4,-1.7) -- (2.4,-3.45) -- (4.15,-3.45);
  \fill (2.4,-3.45) circle (1.3pt);
  \node at (2.4,-1.3) {$x$};
  \node at (4.5,-3.45) {$y$};
  \node[anchor=south] at (2.4,.5) {$r(x)$};
  \node[anchor=south] at (4.5,.5) {$r(y)$};
  \node[anchor=east] at (-.6,-1.3) {$\ell(x)$};
  \node[anchor=east] at (-.6,-3.45) {$\ell(y)$};
\end{tikzpicture}
\]

Claesson, Dukes and Kubitzke~\cite{CDK2011} gave an earlier bijection
between composition matrices and $(2{+}2)$-free posets on $[n]$,
factoring through ascent sequences~\cite{BCDK2010,DP2010}. Dukes,
Jel\'{\i}nek and Kubitzke~\cite{DJK2011} remark that it agrees with
$\Gamma$.

\begin{example}\label{ex:canon}
  Let $P$ on $U=[6]$ be the poset drawn below. The distinct strict
  down-sets are
  $D_1=\emptyset$, $D_2=\{1\}$, $D_3=\{1,2\}$ and $D_4=\{1,2,3,4\}$,
  so $m=4$ and $(\ell(1),\dots,\ell(6))=(1,2,2,2,3,4)$. For
  $x=1,\dots,6$, the largest down-sets not containing $x$ are, respectively,
  $D_1$, $D_2$, $D_3$, $D_3$, $D_4$ and $D_4$. Hence,
  $(r(1),\dots,r(6))=(1,2,3,3,4,4)$. The three equivalent views are shown
  below.
  \[
  \begin{array}{c@{\quad}c@{\quad}c@{\quad}c@{\quad}c}
    \begin{tikzpicture}[
        baseline=(current bounding box.center),
        x=7mm,y=8mm,semithick,every node/.style={font=\small}]
      \node[vtx] (p1) at (.6,0) {};
      \node[vtx] (p2) at (-.6,1) {};
      \node[vtx] (p3) at (.6,1) {};
      \node[vtx] (p4) at (1.8,1) {};
      \node[vtx] (p5) at (-.6,2) {};
      \node[vtx] (p6) at (.6,2) {};
      \draw (p1)--(p2) (p1)--(p3) (p1)--(p4)
            (p2)--(p5) (p2)--(p6) (p3)--(p6) (p4)--(p6);
      \node[below=2pt] at (p1) {$1$};
      \node[left=2pt]  at (p2) {$2$};
      \node[right=2pt] at (p3) {$3$};
      \node[right=2pt] at (p4) {$4$};
      \node[above=2pt] at (p5) {$5$};
      \node[above=2pt] at (p6) {$6$};
    \end{tikzpicture}
    &\quad &
    \begin{tikzpicture}[
        baseline=(current bounding box.center),
        x=9mm,y=3.3mm,every node/.style={font=\small}]
      \foreach \x in {1,2,3,4}{
        \draw[black!30,line width=.4pt] (\x,.55)--(\x,6.5);
        \node[below] at (\x,.5) {$\x$};}
      \foreach \a/\b/\h in {1/1/1, 2/2/2, 2/3/3, 2/3/4, 3/4/5, 4/4/6}{
        \draw[line width=1pt] (\a,\h)--(\b,\h);
        \fill (\a,\h) circle (1.2pt) (\b,\h) circle (1.2pt);
        \node[right=3pt] at (\b,\h) {$\h$};}
    \end{tikzpicture}
    &\quad &
    \begin{bmatrix}
      \{1\} & \emptyset & \emptyset & \emptyset\\
            & \{2\}     & \{3,4\}   & \emptyset\\
            &           & \emptyset & \{5\}\\
            &           &           & \{6\}
    \end{bmatrix}
    \\[45pt]
    \textit{poset} & & \textit{interval model\quad} & & \textit{composition matrix}
  \end{array}
  \]
  The interval model is read directly from the matrix: for instance, $3$
  lies in position $(2,3)$ and is therefore represented by $[2,3]$.
  Lemma~\ref{lem:djk} shows that this is the unique interval model of $P$
  in which every integer in $[4]$ occurs as both a left and a right
  endpoint.
\end{example}

\begin{theorem}\label{thm:canonical}
  The map $\Gamma$ is a natural isomorphism $\I=\CMsum$.
\end{theorem}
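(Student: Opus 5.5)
Lemma~\ref{lem:djk} already shows that for each finite set $U$ the map $\Gamma_U\colon\I[U]\to\CMsum[U]$ is a bijection, so it remains to prove naturality. Concretely, for every bijection $\sigma\colon U\to V$ and every $P\in\I[U]$ we must check
\[
  \Gamma_V(\sigma\cdot P)\;=\;\Gamma_U(P)\circ\sigma^{-1},
\]
since transport in $\CMsum$ is precomposition with $\sigma^{-1}$, as in Definition~\ref{def:cm}.

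The plan is to track the invariants defining $\Gamma$ through relabeling. Write $Q=\sigma\cdot P$. Then $y<_Q \sigma x$ if and only if $\sigma^{-1}y<_P x$, so the strict down-set of $\sigma x$ in $Q$ is
\[
  D_Q(\sigma x)=\sigma\bigl(D_P(x)\bigr).
\]
Since $\sigma$ is a bijection on subsets, it preserves inclusion. Hence the distinct down-sets of $Q$ are exactly $\sigma(D_1)\subset\cdots\subset\sigma(D_m)$, in the same order and with the same $m$. It follows that $\ell_Q(\sigma x)=\ell_P(x)$. Similarly, $\sigma x\notin\sigma(D_k)$ if and only if $x\notin D_k$, which gives $r_Q(\sigma x)=r_P(x)$. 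Therefore $\Gamma_V(Q)(\sigma x)=\Gamma_U(P)(x)$ for all $x\in U$, and this is the displayed identity.

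A slicker alternative uses the inverse formula from Lemma~\ref{lem:djk}, $\Gamma^{-1}(C)=\{(x,y):b(x)<a(y)\}$. Transporting $C$ by $\sigma$ and applying this formula visibly gives $\sigma\cdot\Gamma^{-1}(C)$. The inverse of a natural bijection is natural, so this also proves the claim.

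There is no real obstacle. The only point needing care is the bookkeeping: the chain of down-sets of $\sigma\cdot P$ must be indexed compatibly with that of $P$. This holds because $\sigma$ preserves inclusion, and so it preserves the order of the chain and hence the levels. A final remark is that $\Gamma$ preserves $m$ and so restricts to natural isomorphisms between the interval orders with $m$ distinct down-sets and $\CM{m}$. This graded form is what the later inclusion–exclusion over columns will use.
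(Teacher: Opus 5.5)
Your proof is correct and matches the paper's argument: bijectivity comes from Lemma~\ref{lem:djk}, and naturality follows from $D_{\sigma\cdot P}(\sigma x)=\sigma(D_P(x))$. That identity preserves the indexed chain of down-sets, hence both $\ell$ and $r$, giving $\Gamma(\sigma\cdot P)=\Gamma(P)\circ\sigma^{-1}$, and your alternative via the explicit formula for $\Gamma^{-1}$ is also valid.
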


\begin{proof}
  By Lemma~\ref{lem:djk} the map $\Gamma$ is a bijection between
  $\I[U]$ and $\CMsum[U]$. It remains to prove naturality. Let
  $\sigma\colon U\to V$ be a bijection. We need to prove that the square
  \[
    \begin{tikzpicture}[x=27mm,y=22mm,
        baseline=(current bounding box.center),
        >={Stealth[length=5pt]}]
      \node (iu) at (0,0)   {$\I[U]$};
      \node (cu) at (1,0)   {$\CMsum[U]$};
      \node (iv) at (0,-1)  {$\I[V]$};
      \node (cv) at (1,-1)  {$\CMsum[V]$};
      \draw[->] (iu) -- node[above] {$\Gamma$} (cu);
      \draw[->] (iv) -- node[below] {$\Gamma$} (cv);
      \draw[->] (iu) -- node[left]  {$\I[\sigma]$} (iv);
      \draw[->] (cu) -- node[right] {$\CMsum[\sigma]$} (cv);
    \end{tikzpicture}
  \]
  commutes. Let $P\in\I[U]$. Strict down-sets satisfy
  $D_{\sigma\cdot P}(\sigma x)=\sigma(D_P(x))$. Thus, transport preserves
  the chain $D_1,\dots,D_m$, its indices, and membership in its sets. It
  therefore preserves both $\ell$ and $r$:
  \[
    \ell_{\sigma\cdot P}(\sigma x)=\ell_P(x),
    \qquad
    r_{\sigma\cdot P}(\sigma x)=r_P(x).
  \]
  Hence, $\Gamma(\sigma\cdot P)=\Gamma(P)\circ\sigma^{-1}$, which is the
  transport of Definition~\ref{def:cm}. Thus, the square commutes and
  $\Gamma$ is a natural isomorphism.
\end{proof}

\section{An algebraic proof of Theorem~\ref{thm:intro}}\label{sec:main}

A composition matrix asks two things of an upper triangular matrix:
that no row be empty and that no column be empty. We shall keep the first
condition and impose the second by inclusion-exclusion over the set of
columns used.
For $m\geq 0$, an $m\times m$ \emph{quasi composition
matrix} on $U$ is an upper triangular matrix of subsets of $U$ whose
nonempty entries partition $U$ and in which every row has a nonempty
entry. It is a composition matrix exactly when every column has a nonempty
entry as well.

For a fixed $m\geq 0$ and $K\subseteq[m]$, let $W_K$ be the species of
$m\times m$ quasi composition matrices that use no column outside $K$.
The matrix being upper triangular, row $a$ has
\[
  \nu_K(a)=\#\{b\in K: b\geq a\}
\]
positions at its disposal.

\begin{lemma}\label{lem:columns}
  Let $m\geq 0$ and let $K\subseteq[m]$. Then
  \[
    W_K\;=\;\prod_{a=1}^{m}\bigl(E^{\nu_K(a)}-1\bigr).
  \]
\end{lemma}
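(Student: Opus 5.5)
The plan is to exhibit an explicit natural isomorphism between $W_K$ and the product species on the right. First we describe $W_K$ as a species of maps, in the style of Definition~\ref{def:cm}. A $W_K$-structure on $U$ is a map $C\colon U\to\{(a,b)\in\Int_m: b\in K\}$, $x\mapsto(a(x),b(x))$, such that $x\mapsto a(x)$ is surjective onto $[m]$. Transport is $C\mapsto C\circ\sigma^{-1}$. The upper triangular shape forces $b\geq a$, so row $a$ ranges over the $\nu_K(a)$ positions $\{b\in K:b\geq a\}$.

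Next we read off the right-hand side. By the definition of the product of species, a structure of $\prod_{a=1}^m F_a$ on $U$ is an ordered set partition $U=U_1\sqcup\cdots\sqcup U_m$ together with an $F_a$-structure on each $U_a$. Here $F_a=E^{\nu_K(a)}-1$, which is a genuine species: it is $E^{\nu_K(a)}$ with its unique structure on the empty set removed. So an $F_a$-structure on $U_a$ is an ordered decomposition of $U_a$ into $\nu_K(a)$ possibly empty sets, indexed by the positions $b\in K$ with $b\geq a$, with $U_a\neq\emptyset$. If $\nu_K(a)=0$, then $F_a=1-1=0$. In that case both sides vanish, because row $a$ would have no available position yet must be nonempty. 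The same holds for $m=0$, where both sides equal $1$.

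The isomorphism sends $C$ to $U_a=a^{-1}(a)$, and within row $a$ sends it to the family of fibers $C^{-1}(a,b)$ for $b\in K$, $b\geq a$. The nonemptiness of each $U_a$ is exactly the surjectivity of $a$. The inverse assembles the fibers into a map. Both constructions are defined by taking preimages, so relabeling commutes with them, and this gives naturality.

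The only point needing care is the convention for $E^{\nu}-1$ when $\nu=0$, together with the identification of the sets indexing $E^{\nu_K(a)}$ with the positions $\{b\in K:b\geq a\}$. We fix this identification once by the increasing order of those positions. The argument has no real obstacle beyond this bookkeeping.
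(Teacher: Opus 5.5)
Your proposal is correct and follows the paper's proof. Both read $W_K$ as the species of maps $C\colon U\to\Int_m$ with surjective row coordinate and column values in $K$, split $U$ by rows, and identify each row's contribution, via the tuple of fibers, with the species of maps from a nonempty set to the $\nu_K(a)$ available positions, namely $E^{\nu_K(a)}-1$; your separate treatment of $\nu_K(a)=0$ (where $E^0-1=0$) and of naturality via preimages spells out what the paper leaves implicit.
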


\begin{proof}
  In the form of Definition~\ref{def:cm}, $W_K$ is the species of those
  $C\colon U\to\Int_m$ whose first coordinate map is surjective onto
  $[m]$ and whose second coordinate map takes its values in $K$. Both
  conditions are preserved by transport. The elements in row $a$ form
  a nonempty set, and each may be placed in any of the $\nu_K(a)$
  available positions of that row.
  For a finite linearly ordered set $T$, the species of $T$-valued maps
  is $E^{|T|}$: the natural isomorphism sends a map to its tuple of fibers.
  Thus, the species of $T$-valued maps on a nonempty set is $E^{|T|}-1$.
  Multiplying the contributions from the rows gives the claimed formula.
\end{proof}

\begin{lemma}\label{lem:sieve}
  Let $m\geq 0$. Then, in $\Virt$,
  \[
    \CM{m} \;=\; \sum_{k\geq 0}
    \sum_{\substack{\gamma_1,\dots,\gamma_k\geq 1\\
        \gamma_1+\cdots+\gamma_k=m}}
    (-1)^{m-k}\prod_{i=1}^{k}\bigl(E^{i}-1\bigr)^{\gamma_i}.
  \]
\end{lemma}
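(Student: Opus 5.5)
Let $m\geq 0$. Since $W_{[m]}$ is the species of all $m\times m$ quasi composition matrices, and $W_K$ is the subspecies of those whose set of used columns is contained in $K$, we apply inclusion--exclusion (Möbius inversion on the Boolean lattice of subsets of $[m]$). For each quasi composition matrix, let $S$ be its set of nonempty columns. Then
\[
  \CM{m}\;=\;\sum_{K\subseteq[m]}(-1)^{m-|K|}\,W_K
\]
in $\Virt$. This is an identity of species rather than only of counts. To see this, decompose each $W_K$ as the disjoint sum, over $S\subseteq K$, of the subspecies of matrices with column set exactly $S$; each such subspecies is closed under transport. The coefficient of the summand with column set $S$ is then $\sum_{S\subseteq K\subseteq[m]}(-1)^{m-|K|}$, which is $1$ if $S=[m]$ and $0$ otherwise. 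This is a legitimate manipulation in $\Virt$, because we are only adding and subtracting genuine species.

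Next we substitute Lemma~\ref{lem:columns} and organize the sum by the shape of $K$. First suppose $m\notin K$. Then $\nu_K(m)=0$, so the factor $E^{0}-1=0$ kills the whole product. So only sets $K$ containing $m$ contribute. Write such a $K=\{c_1<c_2<\cdots<c_k=m\}$ with $k=|K|$. For a row $a$, the value $\nu_K(a)$ equals $i$ exactly when $c_{k-i}<a\leq c_{k-i+1}$, where we set $c_0=0$. The number of rows with $\nu_K(a)=i$ is therefore the gap $\gamma_i=c_{k-i+1}-c_{k-i}\geq 1$. Hence
\[
  W_K=\prod_{i=1}^{k}\bigl(E^{i}-1\bigr)^{\gamma_i}.
\]
The map $K\mapsto(\gamma_1,\dots,\gamma_k)$ is a bijection from the subsets of $[m]$ containing $m$ onto the compositions of $m$ into $k$ positive parts. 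Moreover the sign is $(-1)^{m-|K|}=(-1)^{m-k}$. Substituting gives exactly the displayed formula. The case $m=0$ is consistent: the only term is $k=0$ with the empty composition, giving $1=\CM{0}$.

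The main step requiring care is the first one: making the inclusion--exclusion rigorous as a statement about species, not merely about cardinalities. The argument is the partition of each $W_K$ by exact column set, as above. Everything after that is bookkeeping of the row counts $\nu_K(a)$ and the reindexing by gaps.
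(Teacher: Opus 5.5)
Your proof is correct and follows essentially the same route as the paper. Both arguments do inclusion--exclusion over the exact set of nonempty columns, which the paper justifies by writing $W_K=\sum_{J\subseteq K}W^{=}_J$ as a sum of transport-closed subspecies, just as you do. Both then note that $W_K=0$ when $m\notin K$, and reindex the remaining sets $K=\{c_1<\cdots<c_k=m\}$ by the gaps $\gamma_i=c_{k-i+1}-c_{k-i}$, which form a composition of $m$ with sign $(-1)^{m-k}$.
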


\begin{proof}
  Both sides are $1$ when $m=0$, so let $m\geq 1$. Let $W^{=}_{K}$ be
  the subspecies of $W_K$ consisting of those quasi composition matrices
  whose set of nonempty columns is exactly $K$. Then
  $W_K=\sum_{J\subseteq K}W^{=}_{J}$ and $W^{=}_{[m]}=\CM{m}$, so the
  inclusion-exclusion principle gives
  \begin{equation}\label{eq:inex}
    \CM{m}=\sum_{K\subseteq[m]}(-1)^{m-|K|}W_K.
  \end{equation}
  If $m\notin K$, then $\nu_K(m)=0$, so Lemma~\ref{lem:columns} gives
  $W_K=0$. Thus only the subsets containing $m$ contribute.
  Let $K=\{b_1<\cdots<b_k=m\}$ and put $b_0=0$. If
  $b_{j-1}<a\leq b_j$, then $\nu_K(a)=k-j+1$. Thus, on letting
  \[
    \gamma_{k-j+1}=b_j-b_{j-1},
  \]
  we obtain positive integers $\gamma_1,\dots,\gamma_k$ that sum to $m$,
  and
  \[
    W_K=\prod_{i=1}^{k}(E^i-1)^{\gamma_i}.
  \]
  For instance, let $m=7$ and $K=\{2,3,6,7\}$, so that $k=4$. The
  shaded positions below are those in the columns outside $K$, which
  must stay empty. The runs of rows with equally many positions are
  braced on the left and the corresponding factors on the right.
  \[
  \begin{tikzpicture}[x=.52cm,y=.48cm,
      baseline=(current bounding box.center),
      every node/.style={font=\small},
      decoration={brace,amplitude=4pt}]
    \foreach \c in {1,4,5}
      \foreach \r in {1,...,\c} \fill[forbidden] (\c,-\r) rectangle ++(1,-1);
    \foreach \r in {1,...,7}
      \foreach \c in {\r,...,7} \draw[cell] (\c,-\r) rectangle ++(1,-1);
    \foreach \c in {1,...,7}
      \node[anchor=south] at ({\c+.5},-.95) {$\c$};
    \draw[decorate] (.72,-2.94) -- (.72,-1.06);
    \draw[decorate] (.72,-3.94) -- (.72,-3.06);
    \draw[decorate] (.72,-6.94) -- (.72,-4.06);
    \draw[decorate] (.72,-7.94) -- (.72,-7.06);
    \draw[decorate] (8.3,-1.06) -- (8.3,-2.94);
    \draw[decorate] (8.3,-3.06) -- (8.3,-3.94);
    \draw[decorate] (8.3,-4.06) -- (8.3,-6.94);
    \draw[decorate] (8.3,-7.06) -- (8.3,-7.94);
    \node[anchor=east] at (.38,-2)   {$\gamma_4=2$};
    \node[anchor=east] at (.38,-3.5) {$\gamma_3=1$};
    \node[anchor=east] at (.38,-5.5) {$\gamma_2=3$};
    \node[anchor=east] at (.38,-7.5) {$\gamma_1=1$};
    \node[anchor=west] at (8.74,-2)   {$(E^4-1)^{\gamma_4}$};
    \node[anchor=west] at (8.74,-3.5) {$(E^3-1)^{\gamma_3}$};
    \node[anchor=west] at (8.74,-5.5) {$(E^2-1)^{\gamma_2}$};
    \node[anchor=west] at (8.74,-7.5) {$(E^1-1)^{\gamma_1}$};
  \end{tikzpicture}
  \]

  Conversely, a composition $(\gamma_1,\dots,\gamma_k)$ recovers $K$ by
  \[
    b_j=\gamma_k+\gamma_{k-1}+\cdots+\gamma_{k-j+1}.
  \]
  Hence $K\mapsto(\gamma_1,\dots,\gamma_k)$ is a bijection between the
  $k$-element subsets of $[m]$ containing $m$ and the compositions of $m$
  into $k$ parts. Substitution in~\eqref{eq:inex}
  proves the result.
\end{proof}

For instance, the compositions of $m=2$ are $2$ and $1+1$, so that
\[
  \CM{2}=-(E-1)^2+(E-1)(E^2-1)=E\cdot E_+^2,
\]
which agrees with the following direct description. A $2\times 2$
composition matrix has a nonempty entry at $(1,1)$, an arbitrary entry
at $(1,2)$, and a nonempty entry at $(2,2)$.

\begin{proof}[Proof of Theorem~\ref{thm:intro}]
  We have
  \begin{align*}
    \I
    &=\sum_{m\geq 0}\CM{m} && \text{(Theorem~\ref{thm:canonical})} \\
    &=\sum_{m\geq 0}\sum_{k\geq 0}
      \sum_{\substack{\gamma_1,\dots,\gamma_k\geq 1\\
             \gamma_1+\cdots+\gamma_k=m}}
      (-1)^{m-k}\prod_{i=1}^{k}\bigl(E^{i}-1\bigr)^{\gamma_i}
      && \text{(Lemma~\ref{lem:sieve})} \\
    &=\sum_{k\geq 0}\,
      \sum_{\gamma_1,\dots,\gamma_k\geq1}\,
      \prod_{i=1}^{k}
      (-1)^{\gamma_i-1}\bigl(E^i-1\bigr)^{\gamma_i}
      && \text{($\textstyle{m-k=\sum_{i=1}^{k}(\gamma_i-1)}$)} \\
    &=\sum_{k\geq 0}\prod_{i=1}^{k}
      \sum_{\gamma\geq 1}(-1)^{\gamma-1}
      \bigl(E^i-1\bigr)^{\gamma}
      && \text{(distributivity)} \\
    &=\sum_{k\geq 0}\prod_{i=1}^{k}
      \Bigl(1-\sum_{\gamma\geq 0}(-1)^{\gamma}\bigl(E^i-1\bigr)^{\gamma}\Bigr)\\
    &=\sum_{k\geq 0}\prod_{i=1}^{k}
      \Bigl(1-\bigl(1+(E^i-1)\bigr)^{-1}\Bigr) \\
    &=\sum_{k\geq 0}\prod_{i=1}^{k}\bigl(1-(E^{-1})^i\bigr) = \BM && \qedhere
  \end{align*}
\end{proof}

\begin{corollary}\label{cor:positivity}
  The virtual species $\BM$ is positive. In the notation of
  Proposition~\ref{prop:ballot}, there is an isomorphism of species
  \[
    \cB^{+}
    \;=\;
    \cB^{-}+\I.
  \]
\end{corollary}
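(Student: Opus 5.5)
The corollary should follow formally from Theorem~\ref{thm:intro} and Proposition~\ref{prop:ballot}, together with cancellation in the semiring of species. By Proposition~\ref{prop:ballot}, $\BM=\cB^+-\cB^-$. By Theorem~\ref{thm:intro}, $\BM=\I$ in $\Virt$. Since $\I$ is a genuine species, its molecular coefficients are all nonnegative. So $\BM$ is positive and its reduced form is $\BM^+=\I$, $\BM^-=0$.

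For the isomorphism, we unwind the definition of equality in $\Virt$. The identity $\cB^+-\cB^-=\I-0$ means, by the definition of $\sim$, that $\cB^+ + 0 = \cB^- + \I$ as species. This is exactly the claimed combinatorial equality $\cB^+=\cB^-+\I$.

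Alternatively, at the level of molecular decompositions, write $b^\pm_M$ and $i_M$ for the multiplicities of a molecular species $M$ in $\cB^\pm$ and $\I$. The uniqueness of the molecular expansion of a virtual species gives $b^+_M-b^-_M=i_M$ for all $M$. Hence $b^+_M=b^-_M+i_M$. Since species with equal molecular multiplicities are combinatorially equal, the isomorphism follows.

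There is no real obstacle here. The only subtlety is that $\cB^\pm$ are infinite sums of species. They are well defined because $\cB[U]$ is finite, so each has a genuine molecular decomposition, and the identity in $\Virt$ legitimately compares these decompositions. This argument only asserts the existence of a natural isomorphism, not an explicit one. An explicit bijection would require the sign-reversing involution promised in Section~\ref{sec:involution}, which is not needed for the statement as given.
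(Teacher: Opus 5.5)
Your proposal is correct and matches the paper's proof. The paper also combines Proposition~\ref{prop:ballot} with Theorem~\ref{thm:intro} to get $\cB^+-\cB^-=\I-0$ in $\Virt$, and then unwinds the defining equivalence $(F,G)\sim(H,K)\iff F+K=G+H$ to obtain $\cB^+=\cB^-+\I$; positivity of $\BM$ is immediate because $\I$ is a genuine species. Your molecular-multiplicity variant is the same argument, viewed through the uniqueness of molecular decompositions.
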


\begin{proof}
  By Proposition~\ref{prop:ballot} and Theorem~\ref{thm:intro},
  $\cB^+-\cB^-=\I-0$, and
  the defining equivalence relation for $\Virt$ therefore gives
  $\cB^+=\cB^-+\I$.
\end{proof}

Thus, there is a family of bijections
$\cB^{+}[U]\to\cB^{-}[U]\sqcup\I[U]$, one for each finite set $U$, that
commutes with transport. The involution $\eta$ of Claesson and
Hannah~\cite{CH2014} gives a bijection for each linearly ordered $U$, but
that family does not commute with all bijections. For example, $\eta$ sends
the $1\times1$ matrix with entry $\{1,2\}$ to the matrix with entry
$\{1\}\{2\}$. The transposition of $1$ and $2$ fixes the first matrix but
not the second. In the next section we construct a natural bijection.

\section{A bijective proof of Theorem~\ref{thm:intro}}
\label{sec:involution}

We now prove Corollary~\ref{cor:positivity} bijectively. In view of
Proposition~\ref{prop:ballot}, we seek a natural sign-reversing involution
on ballot matrices whose fixed points are positive and naturally in
bijection with interval orders.

Claesson and Hannah~\cite{CH2014} associate an interval
order $P(A)$ with every ballot matrix $A$ on $U$. Let
$(\Row(x),\Col(x))$ be the position of the entry
containing $x\in U$. Then
\[
  x<y \text{ in } P(A)
  \;\iff\;
  \Col(x)<\Row(y).
\]
Since the matrix is upper triangular,
$\Row(x)\leq\Col(x)$. Let
$I_x=[\Row(x),\Col(x)]$.
Since $\max I_x=\Col(x)$ and
$\min I_y=\Row(y)$, the displayed equivalence says precisely
that $x<y$ exactly when $I_x$ lies entirely to the left of $I_y$. Thus, the
intervals $I_x$ represent $P(A)$.

Making each nonempty entry of a composition matrix $C$ into a one-block
ballot gives a ballot matrix $A_C$. The formula for $\Gamma^{-1}$
in Lemma~\ref{lem:djk} then gives $P(A_C)=\Gamma^{-1}(C)$.

\subsection{The involution}
Fix a ballot matrix $A$ of dimension $m$. For each row $i$, let $f_i$ be
the column of its first nonempty entry, and let $S\subseteq[m]$ be the set
of empty columns. Every row is nonempty, so $f_i$ is well defined. If
$m\geq1$, the only position in row $m$ is $(m,m)$, and that entry is
nonempty. Thus, $m\notin S$. Define:
\begin{align*}
  \text{row $i$ is \emph{splittable}}
  &\iff \text{row $i$ contains at least two blocks;} \\
  \text{row $i$ is \emph{mergeable}}
  &\iff
    \text{row $i$ contains one block, $i\in S$ and $f_{i+1}\geq f_i$.}
\end{align*}
Here $i\in S$ forces $i<m$. No row is both splittable and mergeable.

For a \emph{split at row $i$}, let $(i,j)$ be the first nonempty entry and $B$
the first block there. Remove $B$, insert an empty row and column at
index $i$, and place the one-block ballot $B$ at $(i,j+1)$. The remainder
of that entry moves to $(i+1,j+1)$. For a \emph{merge at row $i$}, let $B$
be its block. Since column $i$ is empty, $f_i>i$. Move $B$ to
$(i+1,f_i)$, ahead of any blocks already there, and delete row and column
$i$. See Figure~\ref{fig:splitmerge}.

\begin{figure}[ht]
  \centering
  \begin{tikzpicture}[
      x=.66cm,y=.52cm,
      every node/.style={font=\footnotesize},
      >={Stealth[length=5pt]}
    ]
    \begin{scope}
      \fill[accent] (4,-2) rectangle ++(1,-1);
      \foreach \r in {1,...,5}
        \foreach \c in {\r,...,5}
          \draw[cell] (\c,-\r) rectangle ++(1,-1);
      \node at (4.5,-2.5) {$B\beta$};
      \node at (5.5,-2.5) {$\gamma$};
      \node[anchor=east] at (1.85,-2.5) {$i$};
    \end{scope}

    \draw[->] (7.2,-2.15) -- node[above,font=\scriptsize] {split}
      (10.2,-2.15);
    \draw[<-] (7.2,-2.85) -- node[below,font=\scriptsize] {merge}
      (10.2,-2.85);

    \begin{scope}[shift={(10.4,0)}]
      \foreach \c in {2,...,6} \fill[forbidden] (\c,-2) rectangle ++(1,-1);
      \foreach \r in {1,2}     \fill[forbidden] (2,-\r) rectangle ++(1,-1);
      \fill[accent] (5,-2) rectangle ++(1,-1);
      \fill[forbidden] (3,-3) rectangle ++(2,-1);
      \foreach \r in {1,...,6}
        \foreach \c in {\r,...,6}
          \draw[cell] (\c,-\r) rectangle ++(1,-1);
      \node at (5.5,-2.5) {$B$};
      \node at (5.5,-3.5) {$\beta$};
      \node at (6.5,-3.5) {$\gamma$};
      \node[anchor=east] at (1.85,-2.5) {$i$};
      \node[anchor=south] at (2.5,-.9) {$i$};
    \end{scope}
  \end{tikzpicture}
  \caption{A split and its inverse merge. Here $\beta$ is the remainder
    of the source ballot and $\gamma$ a possible later ballot in the same
    row. Splittability ensures that at least one is present. The shaded
    positions are empty.}
  \label{fig:splitmerge}
\end{figure}
\FloatBarrier

\begin{example}\label{ex:moves}
  Let $U=[8]$. In $A$, row $2$ is the first row admitting a move.
  Splitting off the block $\{2,5\}$ gives $A'$, and merging row $2$
  returns $A$. The gray positions mark the inserted row and column.
  \[
  \setlength{\arraycolsep}{3pt}
  A=\left[
    \begin{array}{cccc}
      \{1\} & \emptyset & \emptyset & \emptyset\\
      & \emptyset & {\color{blue}\{2,5\}}\{7\} & \emptyset\\
      & & \emptyset & \{3,8\}\\
      & & & \{4\}\{6\}
    \end{array}\right]
  \quad
  \begin{array}{c}
    \xrightarrow{\ \text{split}\ }\\[-1mm]
    \xleftarrow[\ \text{merge}\ ]{}
  \end{array}
  \quad
  \left[
    \begin{array}{ccccc}
      \{1\}&{\color{gray}\emptyset}&\emptyset&\emptyset&\emptyset\\
      &{\color{gray}\emptyset}&{\color{gray}\emptyset}&
        {\color{blue}\{2,5\}}&{\color{gray}\emptyset}\\
      &&\emptyset&\{7\}&\emptyset\\
      &&&\emptyset&\{3,8\}\\
      &&&&\{4\}\{6\}
    \end{array}\right]=A'.
  \]
  There are six blocks in both matrices, so their signs are positive
  and negative, respectively. Both matrices determine the interval order
  \[
  \begin{tikzpicture}[
      baseline=(current bounding box.center),
      x=7mm,y=9mm,semithick,every node/.style={font=\small}]
    \node[vtx] (q1) at (2.65,0)  {};
    \node[vtx] (q2) at (0,1.1)   {};
    \node[vtx] (q5) at (1.4,1.1) {};
    \node[vtx] (q7) at (2.8,1.1) {};
    \node[vtx] (q3) at (4.2,1.1) {};
    \node[vtx] (q8) at (5.3,1.1) {};
    \node[vtx] (q4) at (.7,2.2)  {};
    \node[vtx] (q6) at (2.1,2.2) {};
    \draw (q1)--(q2) (q1)--(q5) (q1)--(q7) (q1)--(q3) (q1)--(q8)
          (q2)--(q4) (q2)--(q6) (q5)--(q4) (q5)--(q6)
          (q7)--(q4) (q7)--(q6);
    \node[below=2pt] at (q1) {$1$};
    \node[left=2pt]  at (q2) {$2$};
    \node[right=2pt] at (q5) {$5$};
    \node[right=2pt] at (q7) {$7$};
    \node[above=2pt] at (q3) {$3$};
    \node[above=2pt] at (q8) {$8$};
    \node[above=2pt] at (q4) {$4$};
    \node[above=2pt] at (q6) {$6$};
  \end{tikzpicture}
  \]
  We return to this in Example~\ref{ex:fixedpoint} below.
\end{example}

\begin{definition}
  For $A\in\cB[U]$, let $\theta(A)$ be the result of performing the move
  (split or merge) at the least row admitting one. If no move is
  available, let $\theta(A)=A$.
\end{definition}

The move conditions, the chosen row and the moves themselves use only
occupied positions and block order, not labels. Thus,
$\theta(\sigma\!\cdot\!A)=\sigma\!\cdot\!\theta(A)$ for every bijection
$\sigma$, and $\Fix(\theta)$ is a subspecies of $\cB$.

\begin{theorem}\label{thm:involution}
  For every ballot matrix $A$, we have $\theta(\theta(A))=A$ and
  $P(\theta(A))=P(A)$. If $\theta(A)\neq A$, then $A$ and $\theta(A)$
  have opposite signs.
\end{theorem}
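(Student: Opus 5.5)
The plan is to verify the three assertions directly from the definitions of split and merge. The heart of the argument is showing that a move at the least admissible row $i$ produces a matrix whose least admissible row is again $i$, and that the move available there is the opposite one.

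\emph{Sign.} I would start with sign reversal, which is immediate. A split removes the block $B$ from one entry and re-inserts it as a one-block ballot, so the total number of blocks $k$ is unchanged. The dimension $m$, however, goes up by one. Hence the parity of $k+m$ flips. A merge is the reverse and also flips it.

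\emph{Interval order.} Next I would check $P(\theta(A))=P(A)$ by tracking the intervals $I_x=[\Row(x),\Col(x)]$ under a split at row $i$. For columns I would use the monotone reindexing $\varphi(c)=c$ for $c<i$ and $\varphi(c)=c+1$ for $c\geq i$. Then every element $x$ gets new column $\varphi(\Col(x))$. Its new row is $\Row(x)$ if $\Row(x)<i$ and $\Row(x)+1$ if $\Row(x)>i$. Elements of row $i$ split into two groups: those of $B$ keep row $i$, and those in $\beta$ and $\gamma$ move to row $i+1$. The criterion $\Col(x)<\Row(y)$ is then checked case by case.
\begin{itemize}[nosep]
  \item Rows different from $i$ transform by $\varphi$, so comparisons with them are preserved by monotonicity.
  \item For $y$ in row $i$ we have $\Col(x)<i$ iff $\varphi(\Col(x))<i$ iff $\varphi(\Col(x))<i+1$. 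The last equivalence holds because no column equals $i$ after insertion.
\end{itemize}
So the order is unchanged. Merges are inverse to splits, so this covers them too.

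\emph{Involution.} The main work, and the step I expect to be the obstacle, is $\theta(\theta(A))=A$. By construction a merge at row $i$ undoes a split at row $i$ and vice versa. It therefore suffices to show two things: (a) after a split at row $i$, row $i$ is mergeable, and after a merge at row $i$, row $i$ is splittable; and (b) rows $h<i$ admit no move after the move exactly when they admitted none before.

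For (a) after a split, the new row $i$ has the single block $B$, and column $i$ is the freshly inserted empty column, so $i\in S$. The new row $i+1$ holds $\beta$ at column $j+1$ or $\gamma$ further right, and at least one of these is present. Hence $f_{i+1}\geq j+1=f_i$, and row $i$ is mergeable. After a merge, the new row $i$ contains $B$ followed by the old nonempty contents of row $i+1$, which give at least one further block. Hence row $i$ is splittable.

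For (b), block counts of earlier rows are untouched, so splittability is preserved. For mergeability of a row $h<i$, I would argue as follows.
\begin{itemize}[nosep]
  \item Emptiness of column $h$ is unaffected, since $h<i$.
  \item The values $f_h$ and $f_{h+1}$ are both transformed by the monotone map $\varphi$. The exception is $f_i$, which becomes $j+1=\varphi(j)$, so the same rule applies.
  \item Therefore the inequality $f_{h+1}\geq f_h$ is preserved, including the delicate case $h=i-1$.
\end{itemize}
The merge direction uses the inverse reindexing in the same way. I would write (b) carefully, since the boundary row $h=i-1$ and the case $f_{i-1}=i-1$ are where an off-by-one could hide.
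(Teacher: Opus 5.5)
Your proposal is correct and follows essentially the same route as the paper: the same increasing reindexing (your $\varphi$ is the paper's $\iota$), the same observation that $f'_h=\iota(f_h)$ for $h\leq i$ so that earlier rows keep their move status, the same check that the inverse move is available at row $i$, and the same block-count and dimension parity argument for the sign. The one point you leave implicit is that a split undoes a merge only because mergeability gives $f_{i+1}\geq f_i$, which makes $B$ the first block of the first nonempty entry of the merged row; the paper states this explicitly.
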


\begin{proof}
  Let $A$ be a ballot matrix of dimension $m$.
  Consider a split at row $i$. Let $A'$ be the result, let $B$ be the
  block moved by the split, and let
  $\iota\colon[m]\to[m+1]\setminus\{i\}$ be the increasing bijection,
  given by $\iota(t)=t$ for $t<i$ and $\iota(t)=t+1$ for $t\geq i$.
  Assume $\Row$, $\Col$ and $f_1,\dots,f_m$ are defined as before.
  Write $\Row'$, $\Col'$ and $f'_1,\dots,f'_{m+1}$ for the corresponding
  data of $A'$. Then
  \begin{equation}\label{eq:split-coordinates}
    \Col'(x)=\iota(\Col(x)),
    \quad
    \Row'(x)=
    \begin{cases}
      \iota(\Row(x)),&x\notin B\\
      i,&x\in B
    \end{cases}
  \end{equation}
  and, for $1\leq h\leq i$,
  \begin{equation}\label{eq:split-first}
    f'_h=\iota(f_h).
  \end{equation}

  First, row $i$ of $A'$ holds $B$ alone and column $i$ is empty. Row
  $i+1$, the old row $i$ less $B$, is nonempty, and its entries lie in
  columns $\iota(c)$ with $c\geq f_i$. Hence, $A'$ is a ballot matrix
  and $f'_{i+1}\geq f'_i$. Thus, row $i$ is mergeable
  and merging restores $A$. Conversely, consider
  a merge of $B$ at $(i,j)$. The merge condition gives
  $f_{i+1}\geq f_i=j$, so the old row $i+1$ has no entry before column
  $j$. The inequality $f_i>i$ makes the merged matrix upper triangular.
  Its merged row contains $B$, and every other row is nonempty because it
  comes from a row of $A$. Hence, the result is a ballot matrix. Its row
  $i$ contains at least two blocks, and its first nonempty entry begins
  with $B$. Thus, row $i$ is splittable, and splitting it restores $A$.

  Second, a split preserves the interval order. Let $x,y\in U$. If
  $y\notin B$, then \eqref{eq:split-coordinates} and the fact that
  $\iota$ is increasing give
  \[
    \Col'(x)<\Row'(y)
    \;\iff\;
    \iota(\Col(x))<\iota(\Row(y))
    \;\iff\;
    \Col(x)<\Row(y).
  \]
  If $y\in B$, then $\Row'(y)=\Row(y)=i$, and
  \[
    \Col'(x)<i
    \;\iff\;
    \iota(\Col(x))<i
    \;\iff\;
    \Col(x)<i.
  \]
  Hence $P(A')=P(A)$. Merges preserve the order as well, since they are
  inverse to splits.

  Third, we track the least row admitting a move. For a split at row
  $i$, the number of blocks in row $h<i$ and whether column $h$ is empty
  do not change. Moreover, $h+1\leq i$, so \eqref{eq:split-first} gives
  \[
    f'_{h+1}\geq f'_h
    \;\iff\;
    \iota(f_{h+1})\geq\iota(f_h)
    \;\iff\;
    f_{h+1}\geq f_h.
  \]
  The move conditions therefore show that row $h$ admits the same move,
  if any, before and after the split. Since every merge is inverse to a
  split, the same conclusion holds for a merge.

  If $\theta$ acts first at row $i$, the inverse move is therefore
  available at row $i$ of $\theta(A)$, and no earlier row admits a move.
  Since at most one move is available at a row, $\theta(\theta(A))=A$.

  Finally, each move preserves the number of blocks and changes the
  dimension by one. Thus, each move reverses the sign.
\end{proof}

\subsection{The fixed points}

We shall identify the fixed points of $\theta$ with composition
matrices. First we characterize the ballot matrices fixed by $\theta$.

\begin{proposition}\label{prop:fixed}
  A ballot matrix $A$ of dimension $m$ is fixed by $\theta$ if and
  only if every row contains exactly one block and the block positions
  $(i,f_i)$ satisfy
  \begin{equation}\label{eq:fixedpoint-condition}
    \text{column $i$ empty}\quad\Longrightarrow\quad f_{i+1}<f_i
    \quad(1\leq i<m).
  \end{equation}
  In particular, every fixed point has $m$ blocks and sign
  $(-1)^{2m}=+1$.
\end{proposition}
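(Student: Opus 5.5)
The argument is a direct unwinding of the definition of $\theta$. By construction, $\theta(A)=A$ exactly when no row of $A$ admits a move. So I will show that ``no row is splittable and no row is mergeable'' is equivalent to the two stated conditions.

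For the forward direction, suppose $A$ is fixed. Since no row is splittable, every row has at most one block. Every row of a ballot matrix contains at least one element of $U$, and hence at least one block, because blocks are nonempty. Therefore every row contains exactly one block, and that block sits in position $(i,f_i)$.

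Next, let $1\le i<m$ with column $i$ empty, that is, $i\in S$. Row $i$ has one block and $i\in S$, yet it is not mergeable. By the definition of mergeability, the only clause that can fail is $f_{i+1}\ge f_i$. Hence $f_{i+1}<f_i$, which is \eqref{eq:fixedpoint-condition}.

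For the converse, assume every row has exactly one block and \eqref{eq:fixedpoint-condition} holds. No row is splittable, since each row has a single block. Now suppose some row $i$ were mergeable. Then $i\in S$, and we saw before the definition of $\theta$ that this forces $i<m$ (the entry $(m,m)$ is nonempty). Condition \eqref{eq:fixedpoint-condition} then gives $f_{i+1}<f_i$, contradicting the mergeability requirement $f_{i+1}\ge f_i$. So no row admits a move, and $\theta(A)=A$.

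Finally, a fixed point of dimension $m$ has one block per row, so it has $k=m$ blocks in total. Its sign is therefore $(-1)^{k+m}=(-1)^{2m}=+1$.

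There is no real obstacle here. The only points needing care are that a nonempty row has at least one block, and that the range $i<m$ in \eqref{eq:fixedpoint-condition} is automatic because $m\notin S$.
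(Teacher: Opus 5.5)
Your proof is correct and follows the paper's own argument: $\theta(A)=A$ means no row is splittable or mergeable, nonempty rows make non-splittability equivalent to having exactly one block, and \eqref{eq:fixedpoint-condition} excludes exactly the merges. Your step ``$\theta(A)=A$ exactly when no move is available'' quietly uses that every move changes the dimension and so is never trivial; the paper also takes this for granted.
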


\begin{proof}
  Since every row is nonempty, a row is not splittable precisely when
  it contains one block. Row
  $i<m$ is mergeable precisely when column $i$ is empty and
  $f_{i+1}\geq f_i$. Condition
  \eqref{eq:fixedpoint-condition} excludes exactly these merges.
\end{proof}

We next define a map $\kappa$ from ballot matrices to composition matrices.
The empty ballot matrix is sent to the empty composition matrix. Let $A$
be a nonempty ballot matrix on $U$ of dimension $m$, let $J\subseteq[m]$
be its set of nonempty columns, and put $d=|J|$. For $i\in[m]$, let
\[
  \pi(i)=1+\#\{j\in J:j<i\}
\]
and define $\kappa(A)\colon U\to\Int_d$ by
\[
  \kappa(A)(x) = \bigl(\pi(\Row_A(x)),\pi(\Col_A(x))\bigr).
\]

Thus, $\kappa$ may send elements from distinct blocks to the same
position. For instance,
\[
  \left[
    \begin{array}{cc}
      \emptyset & \{x\}\\
                & \{y\}
    \end{array}
  \right]
  \xrightarrow{\ \kappa\ }
  \left[\{x,y\}\right].
\]
Here $J=\{2\}$ and $\pi(1)=\pi(2)=1$.
Elements from distinct blocks are not sent to the same position when
$A$ is fixed by $\theta$; see Proposition~\ref{prop:fixcm}.

For a composition matrix $C$, we write $P(C)$ for $P(A_C)$.

\begin{lemma}\label{lem:kappa}
  For every ballot matrix $A$, $\kappa(A)$ is a composition matrix and
  \[
    P(\kappa(A))=P(A).
  \]
  Hence, $\kappa(A)=\Gamma(P(A))$. Moreover, the map $\kappa$ commutes with
  transport.
\end{lemma}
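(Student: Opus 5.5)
The plan is to check the three claims in order: that $\kappa(A)$ is a composition matrix, that $\kappa$ preserves the interval order, and that $\kappa$ is natural. The identity $\kappa(A)=\Gamma(P(A))$ then follows from the injectivity in Lemma~\ref{lem:djk}.

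Let $A$ be nonempty of dimension $m$, with $J\subseteq[m]$ its set of nonempty columns and $d=|J|$. As noted after the definition of the involution, $m\in J$. The map $\pi\colon[m]\to[d]$ is weakly increasing, and it sends $J$ bijectively and increasingly onto $[d]$: the $k$th element of $J$ goes to $k$.

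\emph{Composition matrix.} Since $\Row_A(x)\le\Col_A(x)$ and $\pi$ is monotone, the image lies in $\Int_d$.

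For the column map, note that $\Col_A(x)$ always lies in $J$. The set of values $\Col_A(x)$ is exactly $J$, and $\pi$ maps $J$ onto $[d]$, so the column map is surjective.

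For the row map, take $k\in[d]$ and let $j$ be the $k$th element of $J$. Let $j'$ be the preceding element of $J$, or $0$ if $k=1$. Then $\pi(i)=k$ exactly for $j'<i\le j$. Row $j$ of $A$ is nonempty, so some $x$ has $\Row_A(x)=j$, and hence $\pi(\Row_A(x))=k$. The row map is therefore surjective too.

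\emph{Order preservation.} This is the main step. We must show that $\Col(x)<\Row(y)$ holds if and only if $\pi(\Col(x))<\pi(\Row(y))$. The key observation is that for $c\in J$ and any $i\in[m]$,
\[
  c<i \iff \pi(c)<\pi(i).
\]
If $c<i$, then $c$ is counted in $\#\{j\in J:j<i\}$, and $c$ together with the elements of $J$ below it gives $\pi(i)\ge\pi(c)+1$. Conversely, if $c\ge i$, monotonicity gives $\pi(c)\ge\pi(i)$. Applying this with $c=\Col(x)\in J$ and $i=\Row(y)$ gives $P(\kappa(A))=P(A)$, using that $P(C)=P(A_C)$ is defined by the same criterion.

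Since $\kappa(A)$ is a composition matrix, Lemma~\ref{lem:djk} gives $\Gamma(P(\kappa(A)))=\kappa(A)$. Hence $\kappa(A)=\Gamma(P(A))$. The empty case is trivial.

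\emph{Naturality.} The set $J$ and the map $\pi$ depend only on which positions are occupied, and transport does not change this. Moreover $\Row_{\sigma\cdot A}(\sigma x)=\Row_A(x)$, and likewise for $\Col$. Therefore $\kappa(\sigma\cdot A)=\kappa(A)\circ\sigma^{-1}$, which is the transport of Definition~\ref{def:cm}. Alternatively, naturality follows from $\kappa=\Gamma\circ P$, because $P$ clearly commutes with transport and $\Gamma$ does by Theorem~\ref{thm:canonical}.
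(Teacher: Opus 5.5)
Your proposal is correct and follows the paper's proof essentially step by step. Monotonicity of $\pi$, together with its restriction to a bijection $J\to[d]$, shows that $\kappa(A)$ is a composition matrix; the equivalence $c<i\iff\pi(c)<\pi(i)$ for $c\in J$ gives $P(\kappa(A))=P(A)$; and Lemma~\ref{lem:djk} then yields $\kappa(A)=\Gamma(P(A))$. Your direct naturality check is the paper's, and your alternative via $\kappa=\Gamma\circ P$ is also valid, since $P$ visibly commutes with transport and $\Gamma$ is natural by Theorem~\ref{thm:canonical}.
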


\begin{proof}
  We first show that $\kappa$ maps ballot matrices to composition
  matrices and that the following triangle commutes:
  \[
    \begin{tikzpicture}[x=16mm,y=15mm,
        baseline=(current bounding box.center),
        >={Stealth[length=5pt]}]
      \node (b) at (0,0) {$\cB$};
      \node (c) at (2,0) {$\CMsum$};
      \node (i) at (1,-1) {$\I$};
      \draw[->] (b) -- node[above] {$\kappa$} (c);
      \draw[->] (b) -- node[below left] {$P$} (i);
      \draw[->] (i) -- node[below right] {$\Gamma$} (c);
    \end{tikzpicture}
  \]
  Let $A$ be a ballot matrix on $U$, and let $J$ be its set of nonempty
  columns.
  The result is clear when $A$ is empty. Suppose that $A$ has dimension
  $m\geq 1$, and put $d=|J|$. Its last column is nonempty, so $m\in J$. Thus,
  $\pi$ maps $[m]$ onto $[d]$ and restricts to a bijection $J\to[d]$.
  It follows that $\kappa(A)$ has no empty row or column. Since $i\leq j$
  implies $\pi(i)\leq\pi(j)$, it is upper
  triangular. Its nonempty entries partition the underlying set, so it
  is a composition matrix.

  Let $x$ lie in position $(i,j)$ and $y$ in position $(i',j')$ of $A$.
  Since $j\in J$, the definition of $\pi$ gives
  \[
    \Col_A(x)<\Row_A(y)
    \;\iff\;j<i'
    \;\iff\;\pi(j)<\pi(i').
  \]
  By the definition of $\kappa$, the first and last inequalities are the
  conditions for $x<y$ in $P(A)$ and $P(\kappa(A))$, respectively. Hence,
  $P(\kappa(A))=P(A)$. Lemma~\ref{lem:djk} now gives
  $\kappa(A)=\Gamma(P(A))$, so the triangle commutes.

  Let $\sigma\colon U\to V$ be a bijection. We need to show that
  the following square commutes:
  \[
    \begin{tikzpicture}[x=26mm,y=22mm,
        baseline=(current bounding box.center),
        >={Stealth[length=5pt]}]
      \node (bu) at (0,0)  {$\cB[U]$};
      \node (cu) at (1,0)  {$\CMsum[U]$};
      \node (bv) at (0,-1) {$\cB[V]$};
      \node (cv) at (1,-1) {$\CMsum[V]$};
      \draw[->] (bu) -- node[above] {$\kappa$} (cu);
      \draw[->] (bv) -- node[below] {$\kappa$} (cv);
      \draw[->] (bu) -- node[left] {$\cB[\sigma]$} (bv);
      \draw[->] (cu) -- node[right] {$\CMsum[\sigma]$} (cv);
    \end{tikzpicture}
  \]
  Along the top and right sides, $A$ is sent to
  \[
    \CMsum[\sigma](\kappa(A))=\kappa(A)\circ\sigma^{-1}.
  \]
  Along the left and bottom sides, the ballot
  matrix $\cB[\sigma](A)=\sigma\cdot A$ has the same occupied positions as
  $A$, with every label $x$ replaced by $\sigma x$. Hence, $J$ and $\pi$ are
  unchanged, and, for $x\in U$,
  \[
    (\kappa\circ\cB[\sigma])(A)(\sigma x)
      =\kappa(\sigma\cdot A)(\sigma x)
      =\bigl(\pi(\Row_A(x)),\pi(\Col_A(x))\bigr) =\kappa(A)(x).
  \]
  Thus, the second path also sends $A$ to $\kappa(A)\circ\sigma^{-1}$, and
  the square commutes.
\end{proof}

We next construct a map $\lambda$ that will reverse $\kappa$. A fixed point
has one block in each row, so each nonempty entry of a composition matrix
must acquire a row of its own. We choose its column so that $\kappa$ returns
it to its original position.

Let $\lambda$ send the empty matrix to the empty ballot matrix. Suppose that
$C$ has dimension $d\geq1$ and $m$ nonempty entries. To satisfy
condition~\eqref{eq:fixedpoint-condition}, list them as $C_1,\dots,C_m$ by
increasing row and, within each row, decreasing column. Write
$(a(i),b(i))$ for the position of $C_i$, and let $e_r$ be the number of
entries in rows $1,\dots,r$. Then
\[
  0=e_0<e_1<\cdots<e_d=m,
\]
and the entries in row $r$ are $C_{e_{r-1}+1},\dots,C_{e_r}$.

The entry $C_i$ will occupy row $i$ of $\lambda(C)$. The entries from row
$r$ of $C$ have indices $e_{r-1}+1,\dots,e_r$. To make $\pi$ send these
indices back to $r$, we choose $e_r$ as the $r$-th nonempty column. Thus,
to recover the original column $b(i)$, place the one-block ballot $C_i$ in
position
\begin{equation}\label{eq:lambda-position}
  \bigl(i,e_{b(i)}\bigr).
\end{equation}
For each $r\in[d]$, column $r$ of $C$ contains some entry $C_i$, and then
$b(i)=r$. Thus, the nonempty columns of $\lambda(C)$ are exactly
$e_1,\dots,e_d$, as required.

Since $C_i$ lies in row $a(i)$ of $C$, its index in the list satisfies
$i\leq e_{a(i)}$. Since $C$ is upper triangular, $a(i)\leq b(i)$, and hence
\[
  i\leq e_{a(i)}\leq e_{b(i)}.
\]
Thus, the chosen position lies in the upper triangle. The entries $C_i$
partition the underlying set, and one entry is placed in each row, so
$\lambda(C)$ is a ballot matrix.

\begin{example}\label{ex:fixedpoint}
  Let $P$ be the interval order in Example~\ref{ex:moves}. A fixed point
  $A$ representing $P$ and its image under $\kappa$ are
  \[
  \setlength{\arraycolsep}{3pt}
  A=\left[
    \begin{array}{cccc}
      \{1\} & \emptyset & \emptyset & \emptyset\\
            & \emptyset & \emptyset & \{3,8\}\\
            &           & \{2,5,7\} & \emptyset\\
            &           &           & \{4,6\}
    \end{array}\right]
  \;\xrightarrow{\ \kappa\ }\;
  \left[
    \begin{array}{ccc}
      \{1\} & \emptyset & \emptyset\\
            & \{2,5,7\} & \{3,8\}\\
            &           & \{4,6\}
    \end{array}\right]=C.
  \]
  Here $J=\{1,3,4\}$ and $\pi=(1,2,2,3)$. The four blocks move under
  $\kappa$ to the positions
  \[
    (1,1),\quad(2,3),\quad(2,2),\quad(3,3).
  \]
  In the order used to define $\lambda$, the entries of $C$ are
  \[
    \{1\},\quad\{3,8\},\quad\{2,5,7\},\quad\{4,6\}.
  \]
  Its rows contain $1,2,1$ entries, so $(e_1,e_2,e_3)=(1,3,4)$.
  Formula~\eqref{eq:lambda-position} places these entries in columns
  $1,4,3,4$, respectively, and recovers $A$.
\end{example}

\begin{proposition}\label{prop:fixcm}
  The maps
  \[
    \kappa\colon\Fix(\theta)\to\CMsum
    \qquad\text{and}\qquad
    \lambda\colon\CMsum\to\Fix(\theta)
  \]
  are mutually inverse natural isomorphisms.
\end{proposition}

\begin{proof}
  The result is clear for the empty matrices. Let $C$ be a nonempty
  composition matrix, with the notation above. By construction,
  $\lambda(C)$ has one block in each row and nonempty columns
  $e_1,\dots,e_d$. If column $i$ is empty, then $i<m$, and $C_i$ and
  $C_{i+1}$ lie in the same row of $C$. Hence,
  \[
    f_{i+1}=e_{b(i+1)}<e_{b(i)}=f_i,
  \]
  since the entries in that row were listed by decreasing column.
  Proposition~\ref{prop:fixed} shows that $\lambda(C)$ is fixed.

  We next compute $\kappa(\lambda(C))$. Let $\pi$ be the map associated
  with $\lambda(C)$. Since $C_i$ lies in row $a(i)$ of $C$,
  \[
    e_{a(i)-1}<i\leq e_{a(i)}.
  \]
  The nonempty columns are $e_1<\cdots<e_d$, so
  \[
    \pi(i)=a(i),\qquad \pi(e_{b(i)})=b(i).
  \]
  The block $C_i$ lies at $(i,e_{b(i)})$ in $\lambda(C)$. By the definition
  of $\kappa$, this block is sent to
  \[
    \bigl(\pi(i),\pi(e_{b(i)})\bigr)=(a(i),b(i)),
  \]
  its original position in $C$. Thus, $\kappa(\lambda(C))=C$.

  It remains to show that $\lambda(\kappa(A))=A$. Let $A$ be a nonempty
  fixed point of dimension $m$. Write $B_i$ for the block in row $i$ and
  $(i,f_i)$ for its position. List the nonempty columns as
  $J=\{t_1<\cdots<t_d\}$, and put $t_0=0$. We have $t_d=m$, and
  $\pi(i)=r$ precisely when
  \[
    t_{r-1}<i\leq t_r.
  \]
  If $t_{r-1}<i<i'\leq t_r$, then columns $i,\dots,i'-1$ are empty.
  Condition~\eqref{eq:fixedpoint-condition} therefore gives
  \[
    f_i>f_{i+1}>\cdots>f_{i'}.
  \]
  Since $\pi$ is increasing on $J$, these blocks become distinct entries
  of $\kappa(A)$ in decreasing column order. Thus, the entries of
  $\kappa(A)$, ordered by increasing row and decreasing column, are
  $B_1,\dots,B_m$. Its first $r$ rows contain $t_r$ entries.

  Put $C=\kappa(A)$. In the notation used to define $\lambda$, we have
  $e_r=t_r$. Moreover, $f_i\in J$, so $t_{\pi(f_i)}=f_i$. Therefore,
  $\lambda(C)$ places $B_i$ at
  \[
    \bigl(i,e_{\pi(f_i)}\bigr)=(i,f_i),
  \]
  and $\lambda(\kappa(A))=A$. Finally, $\kappa$ is natural by
  Lemma~\ref{lem:kappa}, while the construction of $\lambda$ uses only
  positions and transports each entry as a whole. Hence, both maps are
  natural.
\end{proof}

\begin{theorem}\label{thm:bijective}
  The involution $\theta$ restricts to a natural isomorphism from the
  non-fixed points of $\cB^{+}$ to $\cB^{-}$, and $A\mapsto P(A)$ is a
  natural isomorphism
  $\Fix(\theta)\to\I$. Consequently,
  \[
    \cB^{+}\;=\;\cB^{-}+\I,
  \]
  and Theorem~\ref{thm:intro} follows again, this time bijectively.
\end{theorem}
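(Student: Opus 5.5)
The plan is to assemble the theorem from Theorem~\ref{thm:involution}, Proposition~\ref{prop:fixed}, Proposition~\ref{prop:fixcm}, Lemma~\ref{lem:kappa} and Lemma~\ref{lem:djk}/Theorem~\ref{thm:canonical}.

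First, we split $\cB^+$ into its fixed and non-fixed parts. Since $\theta$ commutes with transport, both $\Fix(\theta)$ and its complement $\cB\setminus\Fix(\theta)$ are subspecies of $\cB$. Their intersections with $\cB^+$ and $\cB^-$ are also subspecies, because sign is preserved by transport. By Proposition~\ref{prop:fixed}, every fixed point is positive, so $\Fix(\theta)\subseteq\cB^+$ and $\cB^-$ consists entirely of non-fixed points. Hence
\[
  \cB^{+}=\bigl(\cB^{+}\setminus\Fix(\theta)\bigr)+\Fix(\theta).
\]
By Theorem~\ref{thm:involution}, $\theta$ is an involution that reverses the sign of every non-fixed point. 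It therefore maps $\cB^+\setminus\Fix(\theta)$ into $\cB^-$ and $\cB^-$ into $\cB^+\setminus\Fix(\theta)$, and these two restrictions are mutually inverse. Naturality of this bijection is exactly the relation $\theta(\sigma\cdot A)=\sigma\cdot\theta(A)$ noted after the definition of $\theta$. So $\cB^+\setminus\Fix(\theta)=\cB^-$ as species.

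Next, we identify $\Fix(\theta)$ with $\I$. Restrict $\kappa$ to $\Fix(\theta)$; by Proposition~\ref{prop:fixcm} this restriction is a natural isomorphism $\Fix(\theta)\to\CMsum$. Composing with $\Gamma^{-1}$, which is natural by Theorem~\ref{thm:canonical}, gives a natural isomorphism $\Gamma^{-1}\circ\kappa\colon\Fix(\theta)\to\I$. By Lemma~\ref{lem:kappa} we have $\kappa(A)=\Gamma(P(A))$, so $\Gamma^{-1}\circ\kappa=P$ on $\Fix(\theta)$. Thus $A\mapsto P(A)$ is the required natural isomorphism, and its naturality comes for free from the composite, with no direct check of $P$ needed.

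Combining the two steps gives $\cB^+=\cB^-+\I$. Hence $\cB^+-\cB^-=\I$ in $\Virt$, and Proposition~\ref{prop:ballot} turns this into Theorem~\ref{thm:intro}. Everything substantive has already been done in the earlier results, so the only point needing care is the sign bookkeeping: one must confirm that the fixed points all lie in $\cB^+$, which Proposition~\ref{prop:fixed} supplies. Without this, the decomposition of $\cB^+$ would pick up fixed points on the wrong side.
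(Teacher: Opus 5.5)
Your proposal is correct and follows the paper's own proof. You use Theorem~\ref{thm:involution} to pair non-fixed matrices of opposite sign, Proposition~\ref{prop:fixed} to place every fixed point in $\cB^{+}$, and Lemma~\ref{lem:kappa} with Proposition~\ref{prop:fixcm} to get $P=\Gamma^{-1}\circ\kappa$ on $\Fix(\theta)$; your remarks on subspecies and on the naturality of $\theta$ just spell out details the paper leaves implicit.
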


\begin{proof}
  By Theorem~\ref{thm:involution}, $\theta$ pairs non-fixed matrices with
  matrices of opposite sign. Proposition~\ref{prop:fixed} shows that the
  fixed points are positive. Lemma~\ref{lem:kappa} and
  Proposition~\ref{prop:fixcm} show that
  $P=\Gamma^{-1}\circ\kappa$ is a natural isomorphism from the fixed points
  to the interval orders. The two isomorphisms give
  $\cB^{+}=\cB^{-}+\I$. Proposition~\ref{prop:ballot} now gives
  $\BM=\I$.
\end{proof}

Let $w_m$ count upper triangular $0$--$1$ matrices of all dimensions
with $m$ ones and no zero row or column. These numbers form the sequence
A138265 in the OEIS~\cite{OEIS}, which begins
$1,1,1,2,5,16,61,271,1372,\dots$

\begin{corollary}\label{cor:fixedseries}
  The fixed-point species is
  \begin{equation}\label{eq:fixballot}
    \Fix(\theta)=\sum_{m\geq 0}w_m\,(E_+)^m.
  \end{equation}
\end{corollary}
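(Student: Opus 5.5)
By Proposition~\ref{prop:fixed}, a fixed point of $\theta$ is a ballot matrix of some dimension $m$ with exactly one block in each row, and its block positions $(i,f_i)$ satisfy \eqref{eq:fixedpoint-condition}. I would split such a fixed point into two pieces of data. The first is its \emph{shape}, the $0$--$1$ upper triangular $m\times m$ matrix with ones exactly at the positions $(i,f_i)$. The second is the sequence $(B_1,\dots,B_m)$ of nonempty blocks, $B_i$ being the block in row~$i$. Neither the shape nor the fixedness depends on labels. So $\Fix(\theta)$ is the sum, over all admissible shapes, of the species of $m$-tuples of pairwise disjoint nonempty sets with union $U$, which is $(E_+)^m$. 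Transport acts on the blocks only, so this decomposition is natural. It remains to count the admissible shapes with $m$ ones and show there are $w_m$ of them.

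An admissible shape is an upper triangular $0$--$1$ matrix with exactly one $1$ in each row, whose nonempty last column is automatic, and which satisfies \eqref{eq:fixedpoint-condition}. To compare these with the matrices counted by $w_m$, I would use Proposition~\ref{prop:fixcm}. It restricts to a bijection between admissible shapes and the shapes of composition matrices, that is, upper triangular $0$--$1$ matrices with no zero row or column. The dictionary is this: for a fixed point with blocks $B_1,\dots,B_m$, the map $\kappa$ sends the blocks to $m$ distinct positions of a composition matrix, and recording which positions are occupied gives a $0$--$1$ matrix with $m$ ones and no zero row or column. In the other direction, $\lambda$ applied to any composition matrix with that support produces a fixed point. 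The positions it uses, namely $(i,e_{b(i)})$ with the entries listed by increasing row and decreasing column, depend only on the support. Since $\kappa$ and $\lambda$ are mutually inverse, and both act through positions alone, they induce mutually inverse maps between admissible shapes with $m$ ones and supports with $m$ ones. The count is therefore $w_m$.

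The only delicate point is the one in the proof of Proposition~\ref{prop:fixcm}: $\kappa$ sends distinct blocks of a fixed point to distinct positions, so the number of ones is preserved. Once that is granted, the identity \eqref{eq:fixballot} follows by summing $(E_+)^m$ over shapes. An alternative check is that a composition matrix with support $S$ is exactly an assignment of an ordered tuple of nonempty disjoint sets to the $|S|$ positions, which gives $\CMsum=\sum_m w_m (E_+)^m$ directly. Combined with $\Fix(\theta)=\CMsum$ from Proposition~\ref{prop:fixcm}, this yields the corollary immediately, and I would present this shorter route as the main proof.
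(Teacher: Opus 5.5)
Your proof is correct, and the route you say you would present is exactly the paper's argument: identify $\Fix(\theta)$ with $\CMsum$ via Proposition~\ref{prop:fixcm}, then decompose composition matrices by their support, so that each of the $m$ ones independently carries a nonempty set and contributes $(E_+)^m$. Your longer first route also works, transferring admissible fixed-point shapes to supports through $\kappa$ and $\lambda$ and using that $\kappa$ keeps distinct blocks in distinct positions, but it is redundant once $\Fix(\theta)=\CMsum$ is available.
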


\begin{proof}
  Proposition~\ref{prop:fixcm} identifies fixed points with composition
  matrices. Replacing each nonempty entry by $1$ gives one of the $w_m$
  matrices above, and its $m$ ones independently carry nonempty sets.
\end{proof}

\section{Generating series}\label{sec:series}

Let $q=x_1+x_2/2+x_3/3+\cdots$ as before. Theorem~\ref{thm:intro}, together with
$E^{-1}(x)=e^{-x}$,
$\widetilde{E^{-1}}(x)=1-x$, and $Z_{E^{-1}}=e^{-q}$, gives the following
three series.

\begin{corollary}\label{cor:series}\leavevmode
  \begin{enumerate}[label=\textup{(\alph*)}]
    \item The generating series for labeled interval orders is
      \[
        \I(x)=
        \sum_{m\geq 0}\prod_{i=1}^{m}\bigl(1-e^{-ix}\bigr).
      \]
    \item The type generating series is
      \[
        \widetilde{\I}(x)=
        \sum_{m\geq 0}\prod_{i=1}^{m}\bigl(1-(1-x)^i\bigr).
      \]
    \item The cycle index series is
      \[
        Z_{\I}=
        \sum_{m\geq 0}\prod_{i=1}^{m}\bigl(1-e^{-iq}\bigr)
        =\I(q).
      \]
  \end{enumerate}
\end{corollary}

The identities $\I(x)=Z_{\I}(x,0,0,\dots)$ and
$\widetilde{\I}(x)=Z_{\I}(x,x^2,x^3,\dots)$ recover parts
\textup{(a)} and \textup{(b)} from part \textup{(c)}. Indeed, $q$
becomes $x$ in the first and $-\log(1-x)$ in the second.

\section{Automorphisms and the molecular decomposition}
\label{sec:molecular}

We first read the automorphism group from the composition matrix.

\begin{corollary}\label{cor:aut}
  Let $P$ be an interval order on $U$ whose composition matrix has
  dimension $m$, and let $C_{ij}=\{x\in U: (\ell(x),r(x))=(i,j)\}$ be
  its entries. A bijection $\sigma\colon U\to U$ is an automorphism of
  $P$ if and only if $\sigma(C_{ij})=C_{ij}$ for every entry. Thus,
  $\Aut(P)$ is the Young subgroup
  $\prod_{(i,j)\in\Int_m}\SymOn(C_{ij})$, where $\SymOn(C)$ is the
  symmetric group on the set $C$.
\end{corollary}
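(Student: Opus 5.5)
The plan is to use the naturality of $\Gamma$ established in Theorem~\ref{thm:canonical}. An automorphism of $P$ is a bijection $\sigma\colon U\to U$ with $\sigma\cdot P=P$. Since $\Gamma$ is injective on $\I[U]$ by Lemma~\ref{lem:djk}, this holds if and only if $\Gamma(\sigma\cdot P)=\Gamma(P)$.

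By the proof of Theorem~\ref{thm:canonical}, $\Gamma(\sigma\cdot P)=\Gamma(P)\circ\sigma^{-1}$. So $\sigma\in\Aut(P)$ if and only if $\Gamma(P)\circ\sigma^{-1}=\Gamma(P)$, that is, if and only if $\Gamma(P)(\sigma x)=\Gamma(P)(x)$ for all $x\in U$. Equivalently, $(\ell(\sigma x),r(\sigma x))=(\ell(x),r(x))$ for every $x$, which says exactly that $\sigma$ maps each fiber $C_{ij}$ into itself. Because $\sigma$ is a bijection and the fibers partition $U$ into finite sets, $\sigma(C_{ij})\subseteq C_{ij}$ for all $(i,j)$ forces $\sigma(C_{ij})=C_{ij}$.

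The converse runs the same chain backwards. If $\sigma(C_{ij})=C_{ij}$ for every entry, then $\Gamma(P)\circ\sigma^{-1}=\Gamma(P)$. Hence $\Gamma(\sigma\cdot P)=\Gamma(P)$, and injectivity gives $\sigma\cdot P=P$. For a more hands-on check, one can also verify directly that the criterion $x<y\iff r(x)<\ell(y)$ is invariant under such $\sigma$.

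Finally, the set of bijections preserving each block of a set partition is the internal direct product of the symmetric groups on the blocks. Empty entries contribute trivial factors, so $\Aut(P)=\prod_{(i,j)\in\Int_m}\SymOn(C_{ij})$, a Young subgroup. There is no real obstacle here. The only point needing care is the direction of transport ($C\circ\sigma^{-1}$ rather than $C\circ\sigma$), and it is immaterial, since invariance under $\sigma^{-1}$ and under $\sigma$ are equivalent.
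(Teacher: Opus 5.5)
Your proposal is correct and follows the paper's proof essentially verbatim. Naturality of $\Gamma$ gives $\Gamma(\sigma\cdot P)=\Gamma(P)\circ\sigma^{-1}$, injectivity then reduces $\sigma\cdot P=P$ to $\sigma$ preserving every fiber $C_{ij}$, and the fiber-preserving bijections form the stated Young subgroup; your added remarks on finiteness and on the direction of transport are harmless elaborations of the same argument.
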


\begin{proof}
  By naturality and the injectivity of $\Gamma$,
  \[
    \sigma\cdot P=P
    \;\iff\;
    \Gamma(P)\circ\sigma^{-1}=\Gamma(P).
  \]
  The last equality holds if and only if $\sigma$ preserves every fiber
  $C_{ij}$. Such a bijection is obtained by choosing a permutation within
  each entry independently, which gives the stated Young subgroup.
\end{proof}

In Example~\ref{ex:canon}, the only nonsingleton entry is $\{3,4\}$.
Thus $\Aut(P)=\SymOn(\{3,4\})$.

The conclusion is special to interval orders. The automorphism group of
$2{+}2$ has order two but is generated by the double transposition that
swaps the two chains, whereas a Young subgroup of order two is generated
by a transposition.

\begin{corollary}
  A permutation $\sigma\in\Sym_n$ with $c$ cycles is an
  automorphism of exactly $|\I[c]|$ interval orders on $[n]$.
\end{corollary}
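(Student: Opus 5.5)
We count, for a fixed $\sigma\in\Sym_n$ with $c$ cycles, the interval orders $P$ on $[n]$ with $\sigma\cdot P=P$, and build an explicit bijection with $\I[c]$. By Corollary~\ref{cor:aut}, $\sigma$ is an automorphism of $P$ if and only if $\sigma$ preserves every entry $C_{ij}$ of $\Gamma(P)$. Since the entries partition $[n]$, this says that each entry is a union of cycles of $\sigma$. So the task reduces to counting composition matrices on $[n]$ whose entries are unions of cycles of $\sigma$. By Theorem~\ref{thm:canonical} and Lemma~\ref{lem:djk}, these are in bijection with the $\sigma$-invariant interval orders.

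Let $Z=\{Z_1,\dots,Z_c\}$ be the set of cycles of $\sigma$. Such composition matrices correspond bijectively to composition matrices on $Z$. Given $C\colon[n]\to\Int_m$ constant on each cycle, define $\bar C\colon Z\to\Int_m$ by $\bar C(Z_k)=C(x)$ for any $x\in Z_k$. Conversely, lift $D\colon Z\to\Int_m$ to $C=D\circ p$, where $p\colon[n]\to Z$ sends $x$ to its cycle. The surjectivity conditions of Definition~\ref{def:cm} on both coordinates are equivalent for $C$ and $\bar C$, because $p$ is surjective. Hence $\CMsum[[n]]^{\sigma}$ is in bijection with $\CMsum[Z]$. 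Here the superscript denotes the $\sigma$-fixed structures, that is, the maps constant on cycles.

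Finally, $|\CMsum[Z]|=|\I[Z]|=|\I[c]|$ by Theorem~\ref{thm:canonical}, since $|Z|=c$ and species cardinalities depend only on the size of the set.

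The one step needing care is the first: that $\sigma\cdot P=P$ is equivalent to $\sigma$ fixing each entry setwise, rather than merely permuting the entries. Corollary~\ref{cor:aut} already supplies this. Its proof rests on the fact that $\Gamma(P)\circ\sigma^{-1}=\Gamma(P)$ means exactly that $\Gamma(P)$ is constant on $\sigma$-orbits, which is precisely what the cycle-collapsing bijection uses. As a sanity check, this is consistent with Corollary~\ref{cor:series}(c): $Z_{\I}=\I(q)$ says the fixed-point count depends only on $\sigma_1+\sigma_2+\cdots=c$ and equals $|\I[c]|$.
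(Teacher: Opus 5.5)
Your proof is correct and is essentially the paper's argument. You use Corollary~\ref{cor:aut} to reduce $\sigma$-invariance to every entry of $\Gamma(P)$ being a union of $\sigma$-cycles, collapse each cycle to a point (your $C\mapsto\bar C$ is the paper's ``replace each entry by the set of cycles it contains''), and conclude with Theorem~\ref{thm:canonical}; your observation that surjectivity of $p$ transfers the no-empty-row and no-empty-column conditions is the detail the paper leaves implicit.
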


\begin{proof}
  By Corollary~\ref{cor:aut}, $\sigma$ fixes $P$ precisely when every
  entry of $\Gamma(P)$ is a union of $\sigma$-cycles. Let $\Omega$ be the
  set of these cycles. Replace each entry $C_{ij}$ by
  \[
    \{B\in\Omega:B\subseteq C_{ij}\}.
  \]
  This gives a composition matrix on $\Omega$. Conversely, replace each
  set of cycles by their union. These constructions are inverse bijections
  between the composition matrices on $[n]$ fixed by $\sigma$ and those on
  $\Omega$. The claim now follows from Theorem~\ref{thm:canonical}.
\end{proof}

For example, let $\sigma\in\Sym_6$ have three cycles. Its cycle type may
be $(2,2,2)$, $(3,2,1)$ or $(4,1,1)$. In every case, exactly
$|\I[3]|=19$ of the $81663$ interval orders in $\I[6]$ are fixed by
$\sigma$.

For an interval order $P$ whose composition matrix has dimension $m$ and
entries $C_{ij}$, define the integer matrix
\[
  N(P)=\Bigl(|C_{ij}|\Bigr)_{(i,j)\in\Int_m}.
\]
Jel\'{\i}nek~\cite{Jelinek2011} calls these
\emph{Fishburn matrices}. They index the
isomorphism types of interval orders and hence the molecules of $\I$.

\begin{corollary}\label{cor:molecular}
  The molecular decomposition of $\I$ is
  \[
    \I\;=\;\sum_{m\geq 0}\sum_N
    \prod_{(i,j)\in\Int_m}E_{N_{ij}},
  \]
  where the inner sum ranges over the upper triangular $m\times m$
  matrices $N$ of nonnegative integers with no zero row and no zero
  column, and factors $E_0=1$ are discarded.
\end{corollary}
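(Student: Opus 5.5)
We work entirely through the natural isomorphism $\Gamma\colon\I=\CMsum$ of Theorem~\ref{thm:canonical}, so the task is the molecular decomposition of $\CMsum=\sum_m\CM{m}$. A composition matrix is determined by its shape $N$ (the matrix of entry sizes $|C_{ij}|$, which is upper triangular with no zero row or column) together with the assignment of elements to entries. Accordingly, the first step splits $\CM{m}$ as a sum over shapes:
\[
  \CM{m}=\sum_N \CM{m}_N ,
\]
where $\CM{m}_N$ is the subspecies of composition matrices with $|C^{-1}(i,j)|=N_{ij}$ for all $(i,j)\in\Int_m$. Transport replaces $C$ by $C\circ\sigma^{-1}$, which preserves fiber sizes, so each $\CM{m}_N$ is indeed a subspecies.

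The second step identifies each $\CM{m}_N$ with the product $\prod_{(i,j)\in\Int_m}E_{N_{ij}}$. Fix an enumeration of $\Int_m$, say lexicographic. The product species assigns to $U$ a tuple of pairwise disjoint sets, indexed by $\Int_m$, with union $U$ and the $(i,j)$th set of size $N_{ij}$. The natural isomorphism sends $C$ to its tuple of fibers $(C^{-1}(i,j))_{(i,j)}$, with inverse sending a tuple to the map recording which component contains $x$. This is exactly the argument already used in Lemma~\ref{lem:columns}, now restricted by cardinalities. Naturality holds because $(C\circ\sigma^{-1})^{-1}(i,j)=\sigma(C^{-1}(i,j))$. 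The surjectivity conditions on the coordinate maps translate precisely into the requirement that $N$ have no zero row and no zero column. Factors with $N_{ij}=0$ are $E_0=1$ and can be discarded.

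The third step checks that each summand is molecular and that distinct $(m,N)$ give non-isomorphic molecules, so that the displayed sum really is the molecular decomposition rather than merely some decomposition. A product of $E_k$'s is molecular: $\Sym_n$ acts transitively on set-tuples with prescribed sizes, and the stabilizer is the Young subgroup of Corollary~\ref{cor:aut}. For distinctness, note that $m$ and $N$ are isomorphism invariants of $C$, since transport preserves fiber sizes and the dimension. Hence different $(m,N)$ are distinct orbits, that is, distinct isomorphism types of interval orders, which are the Fishburn matrices. Moreover, each summand is exactly one orbit, so the coefficient of each orbit is $1$.

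I expect the main obstacle to be a subtlety in this last step. Two distinct shapes $N\neq N'$ can yield the same \emph{abstract} molecular species, for instance the same multiset of nonzero entries up to rearrangement. The molecular decomposition collects isomorphic molecular summands with multiplicity $f_M$, and the statement as written is a sum over $N$, so it holds with such repetitions permitted. I would therefore phrase the conclusion as a decomposition into molecular summands, one for each isomorphism type of interval order. Grouping these by isomorphism class of molecular species, that is, by conjugacy class of the Young subgroup, then gives the coefficients $f_M$, and uniqueness of the molecular decomposition makes this the decomposition.
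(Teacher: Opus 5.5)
Your proposal is correct and follows essentially the paper's proof: you transfer the problem along $\Gamma$, split by the Fishburn matrix $N$, identify each piece with $\prod E_{N_{ij}}$ by taking the tuple of fibers, and use transitivity to see that each piece is molecular. Your remark that distinct $N$ can give the same molecule, for example $XE_2$ twice in degree $3$, agrees with how the paper collects equal products in the discussion right after the corollary.
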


\begin{proof}
  Transport preserves entry sizes, so the interval orders with
  Fishburn matrix $N$ form a subspecies $\I_N$. By
  Theorem~\ref{thm:canonical}, an $\I_N$-structure on $U$ is a family
  $(S_{ij})$ of disjoint sets with union $U$ and $|S_{ij}|=N_{ij}$. Hence
  \[
    \I_N=\prod_{(i,j)\in\Int_m}E_{N_{ij}}.
  \]
  Any two such families are isomorphic by matching corresponding entries,
  so $\I_N$ is molecular. The possible matrices $N$ are precisely those
  in the statement, and summing over them proves the result.
\end{proof}

To read off the terms of degree $n$, list the matrices $N$ whose
entries sum to $n$ and replace each by the product of the species
$E_{N_{ij}}$ over its nonzero entries. In degree $3$ the matrices are
\[
  \begin{bmatrix}3\end{bmatrix},\quad
  \begin{bmatrix}2&0\\&1\end{bmatrix},\quad
  \begin{bmatrix}1&0\\&2\end{bmatrix},\quad
  \begin{bmatrix}1&1\\&1\end{bmatrix},\quad
  \begin{bmatrix}1&0&0\\&1&0\\&&1\end{bmatrix},
\]
with products $E_3$, $XE_2$, $XE_2$, $X^3$ and $X^3$, since $E_1=X$.
After we collect equal products and group by degree, the decomposition
begins
\begin{align*}
  \I
  &=1+X+\bigl(E_2+X^2\bigr)
    +\bigl(E_3+2XE_2+2X^3\bigr)\\
  &\qquad\qquad\qquad
    +\bigl(E_4+2XE_3+E_2^2+6X^2E_2+5X^4\bigr)+\cdots
\end{align*}
The coefficient of each molecular species counts the corresponding
isomorphism types. Figure~\ref{fig:molecular} draws them.

\begin{figure}[ht]
  \centering
  \newcommand{\mterm}[1]{\raisebox{1pt}{$#1$}}%
  \renewcommand{\arraystretch}{1.3}
  \begin{tabular}{r@{\quad}|@{\quad}l}
    \mterm{X} & \hassea{0/0/0}\\ \hline
    \mterm{E_2} & \hassea{0/0/0,1/1/0}\\ \hline
    \mterm{X^2} & \hasse{0/0/0,1/0/1}{0/1}\\ \hline
    \mterm{E_3} & \hassea{0/0/0,1/1/0,2/2/0}\\ \hline
    \mterm{XE_2} & \hasse{0/.5/0,1/0/1,2/1/1}{0/1,0/2}%
             \hasse{0/0/0,1/1/0,2/.5/1}{0/2,1/2}\\ \hline
    \mterm{X^3} & \hasse{0/0/0,1/1.2/0,2/0/1}{0/2}%
            \hasse{0/0/0,1/0/1,2/0/2}{0/1,1/2}\\ \hline
    \mterm{E_4} & \hassea{0/0/0,1/1/0,2/2/0,3/3/0}\\ \hline
    \mterm{XE_3} & \hasse{0/1/0,1/0/1,2/1/1,3/2/1}{0/1,0/2,0/3}%
             \hasse{0/0/0,1/1/0,2/2/0,3/1/1}{0/3,1/3,2/3}\\ \hline
    \mterm{E_2^2} & \hasse{0/0/0,1/1/0,2/0/1,3/1/1}{0/2,0/3,1/2,1/3}\\ \hline
    \mterm{X^2E_2} & \hasse{0/.5/0,1/2/0,2/0/1,3/1/1}{0/2,0/3}%
               \hasse{0/0/0,1/1.2/0,2/2.2/0,3/0/1}{0/3}%
               \hasse{0/0/0,1/1/0,2/2.2/0,3/.5/1}{0/3,1/3}%
               \hasse{0/.5/0,1/.5/1,2/0/2,3/1/2}{0/1,1/2,1/3}%
               \hasse{0/.5/0,1/0/1,2/1/1,3/.5/2}{0/1,0/2,1/3,2/3}%
               \hasse{0/0/0,1/1/0,2/.5/1,3/.5/2}{0/2,1/2,2/3}\\ \hline
    \mterm{X^4} & \hasse{0/.5/0,1/0/1,2/1/1,3/0/2}{0/1,0/2,1/3}%
            \hasse{0/0/0,1/1.2/0,2/0/1,3/0/2}{0/2,2/3}%
            \hasse{0/0/0,1/1/0,2/0/1,3/1/1}{0/2,0/3,1/3}%
            \hasse{0/0/0,1/1.5/0,2/0/1,3/.5/2}{0/2,1/3,2/3}%
            \hasse{0/0/0,1/0/1,2/0/2,3/0/3}{0/1,1/2,2/3}\\
  \end{tabular}
  \caption{The interval orders in degrees $1$ to $4$, grouped by
    molecular species.}
  \label{fig:molecular}
\end{figure}
\FloatBarrier

\section{Rigid interval orders}
\label{sec:rigid}

An interval order is \emph{rigid} if its only automorphism is the
identity. Two elements are \emph{indistinguishable} if they have the
same strict down-set and the same strict up-set. Dukes, Jel\'{\i}nek and
Kubitzke~\cite{DJK2011} prove the following as part of their Lemma~8.
We give a short proof from Corollary~\ref{cor:aut}.

\begin{lemma}[Dukes, Jel\'{\i}nek and Kubitzke~\cite{DJK2011}]\label{lem:entries}
  The nonempty entries of the composition matrix of an interval order
  are precisely its indistinguishability classes.
\end{lemma}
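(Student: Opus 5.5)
We must show that $x$ and $y$ are indistinguishable exactly when they lie in the same entry $C_{ij}$ of $\Gamma(P)$, that is, exactly when $(\ell(x),r(x))=(\ell(y),r(y))$. The paper suggests using Corollary~\ref{cor:aut}, and we plan to do so. The link is a simple observation: $x$ and $y$ are indistinguishable if and only if the transposition $\tau=(x\,y)$ of $U$ is an automorphism of $P$.

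To prove the observation, suppose first that $x$ and $y$ have the same strict down-set and the same strict up-set. Then neither is below the other: $x<y$ would put $x$ in $D(y)=D(x)$, contradicting irreflexivity. Now take any pair $(u,v)\in P$ and check that $(\tau u,\tau v)\in P$, splitting into cases according to which of $u,v$ lie in $\{x,y\}$. The case $\{u,v\}=\{x,y\}$ cannot occur, since $x$ and $y$ are incomparable. In the remaining cases, the relation is preserved because $x$ and $y$ have the same down-set and the same up-set.

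Conversely, suppose $\tau$ is an automorphism and $z<x$. Then $\tau z<\tau x=y$. If $z\notin\{x,y\}$, then $\tau z=z$, so $z<y$. The case $z=y$ cannot occur: it would give $x<y$ and $y<x$ together, which is impossible in a strict order. The case $z=x$ is excluded by irreflexivity. Hence $D(x)\subseteq D(y)$, and by symmetry $D(x)=D(y)$. The same argument gives equality of the up-sets.

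By Corollary~\ref{cor:aut}, the transposition $\tau$ is an automorphism if and only if it preserves every entry $C_{ij}$. A transposition preserves every block of a partition exactly when its two points lie in the same block. So $x$ and $y$ are indistinguishable if and only if they lie in the same nonempty entry. For $x=y$ the claim is trivial. This shows that the indistinguishability classes coincide with the nonempty entries.

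The only step needing care is the case check in the observation, in particular ruling out $x<y$. That check is routine. As a direct alternative, one could note that $\ell$ is determined by $D(x)$, and that $r(x)$ is the level of the strict up-set, as remarked after Lemma~\ref{lem:djk}.
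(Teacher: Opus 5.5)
Your proposal is correct and follows the paper's own proof: the transposition $(x\,y)$ is an automorphism exactly when $x$ and $y$ are indistinguishable, and by Corollary~\ref{cor:aut} it is an automorphism exactly when $x$ and $y$ lie in the same entry. The only difference is that you write out the case check for the first equivalence, which the paper leaves implicit.
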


\begin{proof}
  The transposition exchanging $x$ and $y$ is an automorphism precisely
  when $x$ and $y$ are indistinguishable. By Corollary~\ref{cor:aut}, it
  is an automorphism precisely when $x$ and $y$ lie in the same entry.
\end{proof}

For instance, $3$ and $4$ are the only indistinguishable elements in
Example~\ref{ex:canon}. Both have strict down-set $\{1\}$ and strict
up-set $\{6\}$, and they form the entry $\{3,4\}$.

Corollary~\ref{cor:aut} shows that an interval order is rigid if and only
if every nonempty entry of its composition matrix is a singleton.

\begin{theorem}\label{thm:rigidspecies}
  Let $\cR\subseteq\I$ be the subspecies of rigid interval orders.
  Partitioning an interval order into indistinguishability classes gives
  a natural isomorphism
  \[
    \I=\cR\circ E_+.
  \]
\end{theorem}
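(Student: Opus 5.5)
We will prove $\I=\cR\circ E_+$ by working entirely on the composition-matrix side, via the natural isomorphism $\Gamma$ of Theorem~\ref{thm:canonical}. Recall that an $(\cR\circ E_+)$-structure on $U$ is a pair $(\varpi,R)$, where $\varpi$ is a partition of $U$ into nonempty blocks and $R\in\cR[\varpi]$ is a rigid interval order on the set of blocks. By the remark following Lemma~\ref{lem:entries}, an interval order is rigid exactly when every nonempty entry of its composition matrix is a singleton. So $\Gamma$ identifies $\cR[\varpi]$ with the injective maps $\varpi\to\Int_d$ whose two coordinate maps are surjective onto $[d]$.

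The forward map is defined as follows. Given $P\in\I[U]$ with composition matrix $C=\Gamma(P)\colon U\to\Int_d$:
\begin{enumerate}[label=\textup{(\roman*)}]
\item let $\varpi$ be the set of nonempty fibers $C^{-1}(a,b)$; by Lemma~\ref{lem:entries} these are exactly the indistinguishability classes of $P$;
\item define $\bar C\colon\varpi\to\Int_d$ by sending the fiber $C^{-1}(a,b)$ to $(a,b)$;
\item put $\Phi(P)=(\varpi,\Gamma^{-1}(\bar C))$.
\end{enumerate}
The map $\bar C$ is injective by construction, and its coordinate maps are surjective because those of $C$ are. Hence $\bar C\in\CMsum[\varpi]$ has singleton entries, and $\Gamma^{-1}(\bar C)$ is a rigid interval order on $\varpi$. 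By Lemma~\ref{lem:djk}, this order is simply $S<T$ iff $x<y$ for some (equivalently every) $x\in S$, $y\in T$.

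The inverse sends $(\varpi,R)$, with $\Gamma(R)=\bar C\colon\varpi\to\Int_d$, to $\Gamma^{-1}(\bar C\circ p)$, where $p\colon U\to\varpi$ maps each element to its block. The composite $\bar C\circ p$ has surjective coordinate maps, so it lies in $\CMsum[U]$. Its fibers are exactly the blocks of $\varpi$, because $\bar C$ is injective and each block is nonempty. The two constructions are therefore mutually inverse:
\begin{itemize}
\item starting from $P$, we have $\bar C\circ p=C$;
\item starting from $(\varpi,R)$, the fibers of $\bar C\circ p$ recover $\varpi$, and factoring $\bar C\circ p$ through $p$ recovers $\bar C$.
\end{itemize}

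For naturality, let $\sigma\colon U\to V$ be a bijection. Transport in $\cR\circ E_+$ sends $\varpi$ to $\sigma\cdot\varpi$ and $R$ to $\bar\sigma\cdot R$, where $\bar\sigma\colon\varpi\to\sigma\cdot\varpi$ is the induced map on blocks. Naturality of $\Gamma$ gives $\Gamma(\sigma\cdot P)=C\circ\sigma^{-1}$. The fibers of $C\circ\sigma^{-1}$ are the images $\sigma(S)$ of the fibers $S$ of $C$, and the induced map on these fibers is $\bar C\circ\bar\sigma^{-1}$. Applying $\Gamma^{-1}$ and using its naturality on the block set gives $\Phi(\sigma\cdot P)=\sigma\cdot\Phi(P)$.

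The main obstacle is bookkeeping rather than mathematics: we must state the partitional composition precisely and check that transport on $\cR[\varpi]$ via $\bar\sigma$ matches transport of composition matrices. Every nontrivial step reduces to Lemma~\ref{lem:entries} and to the naturality and bijectivity of $\Gamma$.
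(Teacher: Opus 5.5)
Your proof is correct, and it constructs the same bijection as the paper: $\Gamma^{-1}(\bar C)$ is exactly the paper's quotient order $Q$ on the indistinguishability classes, and $\Gamma^{-1}(\bar C\circ p)$ is exactly its inflation of $R$. The verification, however, follows a genuinely different route.

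The paper works with posets directly. It shows $Q$ is an interval order because it is isomorphic to an induced subposet of $P$. It writes the strict down-sets and up-sets of $P$ as unions of blocks to see that the blocks are pairwise distinguishable in $Q$, and only then uses Lemma~\ref{lem:entries} and Corollary~\ref{cor:aut} to get rigidity. For the converse it inflates an interval representation of $Q$, and reuses the down-set/up-set description to see that the indistinguishability classes of $P$ are the blocks. It then asserts in one sentence that the two constructions are inverse and commute with relabeling.

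You instead transport everything through $\Gamma$. There the quotient becomes ``collapse each entry to a point'' and the inflation becomes ``precompose with $p$''. As a result:
\begin{itemize}
\item membership in $\I$ is automatic, since both orders are images under $\Gamma^{-1}$;
\item rigidity is just injectivity of $\bar C$;
\item mutual inverseness is the identity $\bar C\circ p=C$;
\item naturality is inherited from $\Gamma$, with the induced transport $\bar\sigma$ on blocks made explicit.
\end{itemize}

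What your version buys is a fully checked inverse and naturality argument, which is the part the paper leaves to a single sentence, at essentially no extra cost. What the paper's version buys is an intrinsic description of the isomorphism as quotient and inflation of posets. It also explains directly, via intervals, why inflation preserves $(2{+}2)$-freeness.

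In your argument, Lemma~\ref{lem:entries} serves only to match the wording of the statement. The isomorphism itself goes through verbatim with ``nonempty fibers of $\Gamma(P)$'' in place of ``indistinguishability classes.''
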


\begin{proof}
  Fix a finite set $U$. An $(\cR\circ E_+)$-structure on $U$ is a
  partition $\beta$ of $U$ together with a rigid interval order on $\beta$.
  Let $P\in\I[U]$, and let $\beta$ be its partition into
  indistinguishability classes. Define a relation $<_Q$ on $\beta$ by
  \[
    B<_Q B'
    \quad\Longleftrightarrow\quad
    x<_P y
    \quad\text{for some $x\in B$ and $y\in B'$}.
  \]
  Since elements in the same block have the same strict down-sets and
  strict up-sets, ``some'' may be replaced by ``every''. Thus, $Q$ is
  well defined. Choosing one element from each block identifies $Q$ with
  an induced subposet of $P$, so $Q$ is an interval order. The strict
  down-set and up-set of $x\in B$ in $P$ are, respectively,
  \[
    \{y\in U:y<_P x\}=\bigcup_{A<_Q B}A,
    \qquad
    \{y\in U:x<_P y\}=\bigcup_{B<_Q A}A.
  \]
  Thus, if two blocks were indistinguishable in $Q$, their elements would
  be indistinguishable in $P$ and hence lie in the same block of $\beta$.
  The elements of $Q$ are therefore pairwise distinguishable. By
  Lemma~\ref{lem:entries}, every nonempty entry of its composition matrix
  is a singleton, so Corollary~\ref{cor:aut} shows that $Q$ is rigid.

  Conversely, let $\beta$ be a partition of $U$ and let $Q\in\cR[\beta]$.
  Define a relation on $U$ by $x<_P y$ if and only if $B<_Q B'$, where
  $B,B'\in\beta$ are the blocks containing $x,y$. Take an interval
  representation of $Q$ and give every $x\in B$ the interval of $B$.
  This represents $P$, so $P$ is an interval order.
  Since the elements of $Q$ are pairwise distinguishable, the same
  description of down-sets and up-sets shows that the indistinguishability
  classes of $P$ are precisely the blocks of $\beta$.
  The two constructions are inverse and commute with relabeling. Hence,
  they give the claimed natural isomorphism.
\end{proof}

\begin{corollary}\label{cor:rigidmol}
  With $w_m$ as above,
  \[
    \cR=\sum_{m\geq 0}w_m\,X^{m}.
  \]
\end{corollary}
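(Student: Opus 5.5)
We will prove that $\cR=\sum_{m\geq 0}w_m X^m$ by computing the molecular decomposition of $\cR$ directly from the description of rigid interval orders established above. By Corollary~\ref{cor:aut} and the remark following Lemma~\ref{lem:entries}, an interval order is rigid exactly when every nonempty entry of its composition matrix is a singleton. So under the natural isomorphism $\Gamma$ of Theorem~\ref{thm:canonical}, the subspecies $\cR$ corresponds to the subspecies of $\CMsum$ consisting of composition matrices whose nonempty entries all have size one. Equivalently, its Fishburn matrix $N(P)$ is a $0$--$1$ matrix.

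The key step is to specialize Corollary~\ref{cor:molecular}. Restricting its sum to Fishburn matrices $N$ with entries in $\{0,1\}$ gives
\[
  \cR=\sum_{m\geq 0}\sum_{N}\prod_{(i,j)\in\Int_m}E_{N_{ij}},
\]
where $N$ ranges over the upper triangular $m\times m$ $0$--$1$ matrices with no zero row or column. This restriction is legitimate because $\cR$ is a union of isomorphism classes of $\I$, so it is a sum of some of the molecular summands $\I_N$. Each product then equals $X^{k}$, where $k=\sum N_{ij}$ is the number of ones, since $E_1=X$ and $E_0=1$.

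It remains to regroup by the number of ones instead of by the dimension. Summing over all dimensions, the number of admissible $0$--$1$ matrices with exactly $k$ ones is $w_k$, by the definition preceding Corollary~\ref{cor:fixedseries}. Regrouping the summable sum therefore gives $\cR=\sum_k w_k X^k$. Renaming $k$ as $m$ yields the statement.

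An alternative route, which also serves as a check, combines Theorem~\ref{thm:rigidspecies} with Corollary~\ref{cor:fixedseries}. Since $\I=\Fix(\theta)=\sum_m w_m E_+^m=(\sum_m w_m X^m)\circ E_+$, and $\I=\cR\circ E_+$, one would need to cancel the composition with $E_+$. That cancellation requires invertibility of $E_+$ under composition in $\Virt$, which is more delicate, so I will use the direct molecular argument. There is no substantial obstacle in that argument. The only points needing care are justifying that $\cR$ picks out exactly the molecular summands $\I_N$ with $N$ a $0$--$1$ matrix, and that regrouping the sum is valid because it is summable.
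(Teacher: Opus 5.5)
Your proposal is correct and matches the paper's proof. Rigidity, via Corollary~\ref{cor:aut}, picks out the summands of Corollary~\ref{cor:molecular} indexed by $0$--$1$ Fishburn matrices; each such matrix with $m$ ones contributes $X^m$, and there are $w_m$ of them by definition, with your remarks on whole molecular summands and regrouping the summable sum simply making explicit what the paper leaves implicit.
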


\begin{proof}
  By Corollary~\ref{cor:aut}, an interval order is rigid if and only if
  every nonempty entry of its composition matrix is a singleton. Thus, in
  the molecular decomposition of Corollary~\ref{cor:molecular}, the rigid
  interval orders are indexed by the Fishburn matrices with entries zero
  or one. A matrix with $m$ ones contributes the molecule $X^m$, and there
  are $w_m$ such matrices by definition.
\end{proof}

Combining Theorem~\ref{thm:rigidspecies} and
Corollary~\ref{cor:rigidmol} gives
\[
  \I=\cR\circ E_+=\sum_{m\geq 0}w_m\,(E_+)^m.
\]
By Theorem~\ref{thm:bijective}, this recovers the fixed-point identity
\eqref{eq:fixballot} of Corollary~\ref{cor:fixedseries}. More explicitly,
the nonempty entries of $\kappa(A)$ are the indistinguishability classes
of $P(A)$. Replacing each nonempty entry $C_{ij}$ by $\{C_{ij}\}$ gives
the composition matrix of the rigid interval order on these classes.

Taking type generating series gives
\[
  \widetilde{\cR}(x)=\sum_{m\geq 0}w_m\,x^m,
  \qquad
  \widetilde{\I}(x)=\sum_{m\geq 0}w_m\Bigl(\frac{x}{1-x}\Bigr)^{\!m}.
\]
Thus, $w_m$ is the number of rigid interval orders on $m$ points up to
isomorphism. Substituting $x/(1+x)$ for $x$ in the second identity gives
\[
  \sum_{m\geq 0}w_m\,x^m=\widetilde{\I}\Bigl(\frac{x}{1+x}\Bigr).
\]
The enumerative content here is known. Brightwell and
Keller~\cite{BK2011} use the inflation of rigid orders in their asymptotic
analysis. Khamis~\cite{Khamis2012} and Dukes, Kitaev, Remmel and
Steingr\'{\i}msson~\cite{DKRS2011} obtain this series for
rigid orders, or equivalently for upper triangular $0$--$1$ matrices with
no zero row or column.

\section{Glaisher's T-numbers}
\label{sec:glaisher}

Zagier's~\cite{Zagier2001} asymptotic analysis of the Fishburn numbers
rests on the identity
\begin{equation}\label{eq:glaisher}
  e^{-t/24}\sum_{m\geq 0}\,\prod_{i=1}^{m}\bigl(1-e^{-it}\bigr)
  \;=\;
  \sum_{n\geq 0}\frac{T_n}{n!}\Bigl(\frac{t}{24}\Bigr)^{\!n},
\end{equation}
where $T_0,T_1,T_2,\dots$ are \emph{Glaisher's T-numbers}, listed as
A002439 in the OEIS~\cite{OEIS}. They begin $1, 23, 1681, 257543, 67637281,
\dots$ and are defined by
\[
  \sum_{n\geq 0}\frac{T_n}{(2n+1)!}\,x^{2n+1}
  \;=\;\frac{\sin 2x}{2\cos 3x}.
\]
We give a combinatorial interpretation of $T_n$ in terms of colored
interval orders.

After the rescaling $t=24x$, the left-hand side of \eqref{eq:glaisher}
is $e^{-x}\I(24x)$. Call an element of a poset \emph{isolated} if it is
comparable to no other element. We shall see that $e^{-x}\I(24x)$ is
the generating series of a species: the $24$-colored interval orders in
which no isolated element has color $24$.

For a positive integer $c$, a \emph{$c$-colored $F$-structure} on $U$
is an $F$-structure on $U$ together with a map $U\to[c]$. These form
the species $F(cX)$, the usual substitution of $cX$ into $F$. In
particular, $E(cX)= E^c$ and $F(cX)(x)=F(cx)$.

\begin{definition}
  Let $\cT$ be the species of $24$-colored interval orders in which no
  isolated element has color $24$.
\end{definition}

\begin{theorem}\label{thm:glaisher}
  There is a natural isomorphism $\I(24X)=E\cdot\cT$ and, consequently,
  \[
    \cT\;=\;E^{-1}\I(24X)
  \]
  in $\Virt$. Moreover, $|\cT[n]|=T_n$.
\end{theorem}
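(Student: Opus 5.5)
Our plan is to split off the isolated elements of colour $24$. Take a $24$-colored interval order $(P,c)$ on $U$. Let $S\subseteq U$ be the set of isolated elements of $P$ with $c(x)=24$. Being isolated depends only on the comparability graph, and colours are transported with the labels, so $S$ is defined naturally. We send $(P,c)$ to the pair $\bigl(S,(P|_{U\setminus S},c|_{U\setminus S})\bigr)$.

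The first step is to check that the second component is a $\cT$-structure on $U\setminus S$. Removing isolated elements changes no comparability among the remaining elements, so an element of $U\setminus S$ is isolated in $P|_{U\setminus S}$ exactly when it was isolated in $P$. Each such element has colour different from $24$, by the definition of $S$. An induced subposet of a $(2{+}2)$-free poset is again $(2{+}2)$-free, so the restriction is an interval order.

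Conversely, given a set $S$ and a $\cT$-structure $(Q,c')$ on $U\setminus S$, we adjoin the elements of $S$ as isolated points of colour $24$. The result is still $(2{+}2)$-free, since an induced $2{+}2$ has no isolated element and so would already lie in $Q$. The two constructions are mutually inverse and commute with relabeling. Since $\I(24X)$ is the species of $24$-colored interval orders, this gives the natural isomorphism $\I(24X)=E\cdot\cT$.

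Multiplying both sides by $E^{-1}$ in $\Virt$ then gives $\cT=E^{-1}\I(24X)$.

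For the count we take generating series, which are multiplicative. Using Corollary~\ref{cor:series}(a) and $F(cX)(x)=F(cx)$, we get
\[
  \cT(x)=e^{-x}\I(24x)=e^{-x}\sum_{m\geq0}\prod_{i=1}^{m}\bigl(1-e^{-24ix}\bigr).
\]
Setting $t=24x$ in \eqref{eq:glaisher} turns the right-hand side into $\sum_n T_n x^n/n!$. Comparing coefficients gives $|\cT[n]|=T_n$.

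There is one formal point to watch: \eqref{eq:glaisher} must be read as an identity of formal power series in $t$, as Zagier states it. The substitution $t=24x$ is then legitimate. We expect this check, together with the fact that none of the cited results are needed beyond the series identity, to be the main obstacle; everything else is routine.
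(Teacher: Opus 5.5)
Your proof is correct and follows the paper's argument: you split off the isolated elements of colour $24$ to get the natural isomorphism $\I(24X)=E\cdot\cT$, multiply by $E^{-1}$, then take generating series and compare with \eqref{eq:glaisher} at $t=24x$. You give more detail than the paper, namely the check that isolatedness on $U\setminus S$ is unchanged and that adjoining isolated points cannot create a $2{+}2$. The formal-series point you flag is routine rather than an obstacle, since the $m$th product is $O(t^m)$ and the substitution $t=24x$ respects all the formal operations.
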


\begin{proof}
  A $24$-colored interval order is uniquely the disjoint union of its
  isolated elements of color $24$ and a $\cT$-structure. This decomposition
  is natural and proves $\I(24X)=E\cdot\cT$. Taking generating series gives
  $\cT(x)=e^{-x}\I(24x)$. Comparing with \eqref{eq:glaisher} at $t=24x$
  yields $|\cT[n]|=T_n$.
\end{proof}

This interpretation by colored interval orders seems to be new. The
OEIS~\cite{OEIS} records the generating function obtained from
\eqref{eq:glaisher} by setting $t=24x$, but gives no combinatorial
interpretation.
Hoffman~\cite{Hoffman1999} shows that $2T_n$ counts the
$ER_{2n+1}$-snakes, a class of $3$-signed alternating permutations.

From $\cT=E^{-1}\I(24X)$ we can also calculate the cycle
index series of $\cT$.

\begin{corollary}
  With $q=x_1+x_2/2+\cdots$ as before, the
  cycle index series of $\cT$ is
  \[
    Z_{\cT}
    =e^{-q}\sum_{m\geq 0}\prod_{i=1}^{m}\bigl(1-e^{-24iq}\bigr)
    =\cT(q).
  \]
  Here $\cT(x)=\sum_{n\geq 0}T_nx^n/n!$ as above. Its type generating
  series is
  \[
    \widetilde{\cT}(x)=(1-x)\sum_{m\geq 0}\prod_{i=1}^{m}
    \bigl(1-(1-x)^{24i}\bigr).
  \]
\end{corollary}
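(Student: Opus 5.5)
The plan is to work entirely at the level of cycle index series, using Theorem~\ref{thm:glaisher} and the multiplicativity of $Z$ over products in $\Virt$. From $\cT=E^{-1}\I(24X)$ we get $Z_{\cT}=Z_{E^{-1}}\cdot Z_{\I(24X)}=e^{-q}\,Z_{\I(24X)}$, so the main task is to compute $Z_{\I(24X)}$. I would avoid the general plethysm formula and instead apply Theorem~\ref{thm:intro} directly. Since $E(24X)=E^{24}$, substitution of $24X$ (a ring homomorphism on summable virtual species) gives
\[
  \I(24X)=\sum_{m\geq 0}\prod_{i=1}^{m}\bigl(1-(E^{-1})^{24i}\bigr),
\]
because $E^{-1}(24X)=(E^{24})^{-1}=(E^{-1})^{24}$. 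Then $Z_{E^{-1}}=e^{-q}$ yields $Z_{\I(24X)}=\sum_m\prod_i(1-e^{-24iq})$. This gives the first displayed equality.

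For the second equality, $\cT(q)$, I would compare with the generating series. Setting $x_1=x$ and $x_k=0$ for $k\geq2$ turns $q$ into $x$, so $\cT(x)=e^{-x}\sum_m\prod_i(1-e^{-24ix})$. This is consistent with $\cT(x)=e^{-x}\I(24x)$ from the proof of Theorem~\ref{thm:glaisher}. The cycle index expression is exactly this function evaluated at $x=q$, so $Z_{\cT}=\cT(q)$. Here $\cT(q)=\sum_n T_nq^n/n!$ is read as a formal power series in $q$, which is legitimate because $q$ has no constant term.

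For the type generating series, I would substitute $x_k=x^k$, so that $q=-\log(1-x)$ and $e^{-q}=1-x$. Then $e^{-24iq}=(1-x)^{24i}$, which gives the stated product formula.

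The only step needing care is justifying that substitution commutes with the infinite sum and with inverses in $\Virt$. I would argue this via summability: each product vanishes below degree $m$, and substitution of $24X$ preserves degree. A fallback is to use the general formula $Z_{F(24X)}=Z_F(24x_1,24x_2,\dots)$. Under this substitution $q\mapsto 24q$, and Corollary~\ref{cor:series}(c) then gives $Z_{\I(24X)}=\I(24q)$ immediately, which bypasses the issue.
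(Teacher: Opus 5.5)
Your proposal is correct and follows the same route the paper indicates: the paper only says the series can be computed from $\cT=E^{-1}\I(24X)$, and you carry this out by writing $Z_{\cT}=Z_{E^{-1}}Z_{\I(24X)}=e^{-q}Z_{\I(24X)}$, showing $Z_{\I(24X)}=\I(24q)$, and then setting $x_k=x^k$ so that $e^{-q}=1-x$. The one small difference is that your main route pushes the substitution of $24X$ through Theorem~\ref{thm:intro} at the level of virtual species, while your fallback (the plethystic rule $Z_{F(24X)}=Z_F(24x_1,24x_2,\dots)$ applied to Corollary~\ref{cor:series}(c)) is the quickest way to get the same formula.
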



The coefficients of $\widetilde{\cT}(x)$ begin
\[
  1,23,852,43772,2883382,231864234,22020936532,\dots
\]
and this sequence is not in the OEIS~\cite{OEIS}.

\section*{Acknowledgments}

Claude (Anthropic) and GPT (OpenAI) were used during the
preparation of this manuscript. This included exploring and checking
conjectures through programming, formulating propositions, developing
proofs, and editing. The author, however, takes full responsibility for
the content of the manuscript.

\end{document}